\documentclass[a4paper, 12pt]{article}
\pdfoutput=1

\usepackage[T1]{fontenc}
\usepackage[utf8]{inputenc}

\usepackage{amsfonts,amsmath,amssymb,mathtools,dsfont,microtype} % Symbols & microtypography
\usepackage[dvipsnames]{xcolor} % colors with names -- see https://www.overleaf.com/learn/latex/Using_colors_in_LaTeX for list
\usepackage[noBBpl]{mathpazo} % Palatino, but not for mathbb

\DeclareFontFamily{U} {cmr}{}

\DeclareFontShape{U}{cmr}{m}{n}{
	<-6> cmr5
	<6-7> cmr6
	<7-8> cmr7
	<8-9> cmr8
	<9-10> cmr9
	<10-12> cmr10
	<12-> cmr12}{}

\DeclareSymbolFont{Xcmr} {U} {cmr}{m}{n}
\DeclareMathSymbol{\Delta}{\mathord}{Xcmr}{'001}
\DeclareMathSymbol{\Upsilon}{\mathord}{Xcmr}{'007}
\DeclareMathSymbol{\Omega}{\mathord}{Xcmr}{'012}

\usepackage[labelsep = period, labelfont = bf, justification = centering]{caption}
\usepackage{float,graphicx,subcaption}
\DeclareCaptionSubType*[roman]{figure}

\usepackage{booktabs,makecell}
\usepackage{enumitem,mdwlist}
\setlist[itemize]{topsep=0ex,itemsep=0ex,parsep=0.4ex}
\setlist[enumerate]{topsep=0ex,itemsep=0ex,parsep=0.4ex}

\usepackage{pgf,tikz,ifthen,calc}
\usepackage{float, graphicx}
\usepackage{tkz-euclide}
\tkzSetUpPoint[fill=black, size = 3pt]
\tkzSetUpLine[color=black, line width=0.6pt]
\tkzSetUpCircle[color=black, line width=0.6pt]

\usepackage[left = 2.5cm, right = 2.5cm, top = 2.5cm, bottom = 2.5cm, headsep = 12pt, headheight = 15pt]{geometry}
\usepackage{parskip}

\usepackage[hyphens]{url} % Urls together with line breaking them
\usepackage[linktoc = all, hidelinks, colorlinks, unicode=true, pagebackref=true]{hyperref} % Must be loaded after url
\usepackage[capitalise, compress, nameinlink, noabbrev]{cleveref} % Must be loaded after hyperref

\hypersetup{linkcolor={blue!70!black}, citecolor={green!70!black}, urlcolor={blue!70!black}}

\usepackage{amsthm,thmtools,thm-restate}
\declaretheorem[name = Theorem, numberwithin = section, style = plain]{thm}

\declaretheorem[name = Corollary, numberlike = thm, style = plain]{cor}
\declaretheorem[name = Conjecture, numberlike = thm, style = plain]{conj}
\declaretheorem[name = Definition, numberlike = thm, style = definition]{defi}
\declaretheorem[name = Lemma, numberlike = thm, style = plain]{lem}

\declaretheorem[name = Question, numberlike = thm, style = plain]{que}

\crefname{defi}{Definition}{Definitions}
\crefname{thm}{Theorem}{Theorems}
\crefname{lem}{Lemma}{Lemmas}
\crefname{conj}{Conjecture}{Conjectures}
\crefname{claim}{Claim}{Claims}
\crefname{cor}{Corollary}{Corollaries}
\crefname{obs}{Observation}{Observations}
\crefname{prop}{Proposition}{Propositions}
\crefname{que}{Question}{Questions}
\crefname{rem}{Remark}{Remarks}
\crefname{subsection}{\S}{\S\S}

\DeclareFontFamily{U}{matha}{\hyphenchar\font45}
\DeclareFontShape{U}{matha}{m}{n}{
	<5> <6> <7> <8> <9> <10> gen * matha
	<10.95> matha10 <12> <14.4> <17.28> <20.74> <24.88> matha12
}{}
\DeclareSymbolFont{matha}{U}{matha}{m}{n}

\DeclareMathSymbol{\specialuparrow}{\mathrel}{matha}{"D2}
\DeclareMathSymbol{\specialrightarrow}{\mathrel}{matha}{"D1}

\renewcommand*{\backref}[1]{}
\renewcommand*{\backrefalt}[4]{
	\ifcase #1 Not cited.%
	\or $\specialuparrow$#2%
	\else $\specialuparrow$#2%
	\fi%
}

\renewcommand{\epsilon}{\varepsilon}
\renewcommand{\ge}{\geqslant}
\renewcommand{\le}{\leqslant}

\renewcommand{\emptyset}{\varnothing}

\DeclarePairedDelimiter{\ceil}{\lceil}{\rceil}
\DeclarePairedDelimiter{\floor}{\lfloor}{\rfloor}

\newcommand*{\bN}{\mathbb{N}}

\title{Locally bipartite subgraphs\\via multicolor Ramsey numbers}
\author{Raphael Steiner\footnotemark[1]}
\date{\today}

\begin{document}

\maketitle

\renewcommand{\thefootnote}{\fnsymbol{footnote}} % Make affiliation marks symbols

\footnotetext[1]{Department of Mathematics, ETH Z\"{u}rich, Switzerland (\textsf{\href{mailto:raphaelmario.steiner@math.ethz.ch}{raphaelmario.steiner@math.ethz.ch}}). Research supported by the SNSF Ambizione Grant No. 216071.}

\renewcommand{\thefootnote}{\arabic{footnote}} % Return to normal footnote symbols

\begin{abstract}
A famous conjecture of Erd\H{o}s and Hajnal (1969) states that for every integer $g\ge 4$ there exists a (smallest) function $f_g:\mathbb{N}\rightarrow \mathbb{N}$ such that every graph of chromatic number at least $f_g(k)$ contains a subgraph with chromatic number at least~$k$ and girth at least $g$. So far, this has only been proved for $g=4$ by Rödl~(1977), whose proof yields an upper bound on $f_4(k)$ which is a tower of height $\Theta(k^2\log k)$. In this paper, we exhibit a surprising connection of the problem of finding high-chromatic subgraphs of large \emph{odd-girth} (i.e., avoiding short odd cycles) to the problem of lower-bounding multicolor Ramsey numbers of odd cycles. Using this connection, we show a relaxed version of Erd\H{o}s and Hajnal's conjecture: For every odd $g\ge 5$ there is a function $h_g:\mathbb{N}\rightarrow \mathbb{N}$ growing as a power tower of height $\frac{g-3}{2}$ such that every graph of chromatic number at least $h_g(k)$ has a subgraph of chromatic number at least $k$ and odd-girth at least $g$. This proves a conjecture of Mohar and Wu from 2018, addresses a problem of Erd\H{o}s and Hajnal from 1975, and in the $g=5$ case improves the aforementioned tower-type upper bound on $f_4(k)$ due to Rödl to a single-exponential. We then extend this result to a much more general meta-theorem which establishes the analogous statement for many graph parameters $f(\cdot)$. For example, we show that if $f(\cdot)$ is the fractional chromatic number, the Hall ratio, or the strict vector chromatic number (a.k.a. Lov\'{a}sz-Theta-function of the complement), then for every $k,g\in \mathbb{N}$ and every graph $G$ for which $f(G)$ is sufficiently large, there is a subgraph $G'$ of odd-girth at least $g$ such that $f(G')\ge k$.

The key Ramsey-theoretic ingredient of our proof is a new lower bound on Ramsey numbers of odd cycles. More precisely, for $p\ge 1$, let $\mathcal{O}_p=\{C_3,C_5,\ldots,C_{2p+1}\}$. We show that
$R_k(\mathcal{O}_p)\ge (\log^{(p-1)}k)^{k/3-o(k)}$ for every fixed $p$, where $\log^{(p-1)}$ denotes the $(p-1)$-fold iterated logarithm. This yields the first superexponential lower bound on multicolor Ramsey numbers of fixed odd cycles, and extends the recent breakthrough by OpenAI on the multicolor Ramsey numbers of triangles.
\end{abstract}

\section{Introduction}
The chromatic number is one of the fundamental graph parameters, and has been investigated in graph theory at least since the formulation of the Four-Color-Problem by Francis Guthrie in 1852. In particular, the study of which substructures can be guaranteed in all graphs of sufficiently high chromatic number has a long history in graph theory, going back at least to Hadwiger's famous conjecture~\cite{MR12237} from 1943 and the seminal constructions of Tutte and Zykov~\cite{zykov,tutte} from the mid-20th century, demonstrating the existence of triangle-free graphs with arbitrarily large chromatic number. Later, in one of the first instances of the probabilistic method in graph theory, Erd\H{o}s~\cite{MR102081} famously proved the existence of graphs that simultaneously have arbitrarily large chromatic number and girth\footnote{Recall that the \emph{girth} of a graph is defined to be the length of its shortest cycle if it has a cycle, and is set to $\infty$ otherwise.}. Since then, the topic of unavoidable substructures in high-chromatic graphs has flourished into an active and diverse branch of graph theory; we refer the interested reader to~\cite{MR4174126,MR4680419} for two recent surveys on the topic.

Amongst the most famous open problems in this direction is a conjecture due to Erd\H{o}s and Hajnal~\cite{Erdos1969Chromatic} going back to at least 1969, stating that high-chromatic graphs of large girth not only exist (as shown by Erd\H{o}s), but are in fact \emph{ubiquitous} in the sense that they can be found within \emph{every} graph of sufficiently large chromatic number.
\begin{conj}\label{con:erdhajnal}
For every integer $g\ge 4$ there exists a (smallest) function $f_g:\mathbb{N}\rightarrow \mathbb{N}$ such that every graph $G$ of chromatic number at least $f_g(k)$ contains a subgraph with chromatic number at least $k$ and girth at least $g$.  
\end{conj}
Conjecture~\ref{con:erdhajnal}, despite its innocent look, has seen little progress over the years. The only solved special case is when $g=4$, a famous result due to R\"{o}dl~\cite{MR469806} from 1977. R\"{o}dl's proof for the existence of $f_4(k)$, while extremely elegant and memorable, produces an enormous upper bound on $f_4(k)$, namely a power tower of $k$-s of height $\Theta(k^2\log k)$ (cf.~\cite{MR5027075}). The conjecture remains open for $g\ge 5$, but by analyzing the so-called \emph{Burling graphs}, Pettie, Tardos and Walczak~\cite{MR5027075} recently achieved the remarkable result that $f_5(k)$, if it exists, must grow at least as fast as a power tower of height linear in~$k$. The weaker problem, whether one can always pass to a subgraph of large chromatic number and large \emph{odd-girth}\footnote{The odd-girth of a graph is defined as the length of its shortest odd cycle, and is set to $\infty$ if no odd cycle exists.}, has also remained open and was explicitly highlighted as a conjecture by Mohar and Wu in 2018~\cite{MR4356461}.

\begin{conj}[cf.~Conjecture~7 in~\cite{MR4356461}]\label{con:moharwu}
For every odd integer $g\ge 5$ there exists a (smallest) function $h_g:\mathbb{N}\rightarrow \mathbb{N}$ such that every graph of chromatic number at least $h_g(k)$ contains a subgraph of chromatic number at least $k$ and odd-girth at least $g$.  
\end{conj}
A variant of this problem was in fact already posed much earlier by Erd\H{o}s and Hajna~\cite{MR409246} in 1975, who asked whether for every infinite cardinal $\mathfrak{m}$ and every odd integer $g\ge 5$ every infinite graph of chromatic number at least $\mathfrak{m}$ contains a subgraph with chromatic number at least $\mathfrak{m}$ and odd-girth at least $g$.

As our first main result, we resolve Conjecture~\ref{con:moharwu} and obtain a single-exponential upper bound on $f_4(k)=h_5(k)$, improving the aforementioned tower-type bound due to R\"{o}dl. As a consequence, we resolve the problem of Erd\H{o}s and Hajnal for $\mathfrak{m}=\aleph_0$.

More precisely, we prove the following statement. Here, for some integer $i\ge 0$ we denote by $\exp^{(i)}(\cdot)$ the $i$-fold iterated exponential function, with $\exp^{(0)}(x):=x$.
\begin{thm}\label{thm:main}
Let $g\ge5$ be a fixed odd integer and $\varepsilon>0$. For every sufficiently large $k\in \mathbb{N}$, every graph $G$ with $\chi(G)\ge
 \exp^{\left(\frac{g-3}{2}\right)}\!\bigl(k^{3+\varepsilon}\bigr)$ contains a subgraph with chromatic number at least $k$ and odd-girth at least $g$. In particular, every graph with chromatic number at least $e^{k^{3+\varepsilon}}$ contains a triangle-free subgraph of chromatic number at least $k$. 
\end{thm}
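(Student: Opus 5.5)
We reduce the statement to a lower bound on multicolor Ramsey numbers of odd cycles, namely the bound $R_k(\mathcal{O}_p)\ge (\log^{(p-1)}k)^{k/3-o(k)}$ from the abstract, which we take as given (it is proved later in the paper). Set $p=\frac{g-3}{2}$, so $\mathcal{O}_p=\{C_3,\dots,C_{g-2}\}$, and put $N=R_{k'}(\mathcal{O}_p)-1$ for a suitable $k'$ of order $k$. By definition there is an edge-colouring $c$ of $K_N$ with $k'$ colours such that each colour class $H_1,\dots,H_{k'}$ has odd-girth at least $g$.

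Now let $G$ be a graph with $\chi(G)\ge N^{\,?}$; the right threshold is fixed below. Partition the vertices of $G$ into colour classes of an optimal colouring and identify the colours with the vertices of $K_N$; more generally, map $V(G)$ onto $V(K_N)$ by a homomorphism-type assignment. For an edge $uv$ of $G$ with $\phi(u)\ne\phi(v)$, give it the colour $c(\phi(u)\phi(v))$. Then the spanning subgraph $G_i$ formed by edges of colour $i$ admits a homomorphism to $H_i$. Since a homomorphism maps an odd cycle to an odd closed walk, which contains an odd cycle of at most the same length, $G_i$ has odd-girth at least $g$. Since $G=\bigcup_i G_i$ and chromatic number is submultiplicative over edge-unions, $\chi(G)\le\prod_i\chi(G_i)$. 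Hence if every $\chi(G_i)<k$, then $\chi(G)<k^{k'}$. This alone is too weak, because it only needs $\chi(G)\ge k^{k}$ and does not use $N$.

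The correct use of the Ramsey colouring must be stronger. The map $\phi$ should be a proper colouring of $G$ with $N$ colours, possible only when $\chi(G)\le N$, and the conclusion then reads in the contrapositive. Every graph $G$ satisfies $\chi(G)\le\prod\chi(G_i)$, which is vacuous. We therefore take the intended argument as follows. Suppose every subgraph of odd-girth $\ge g$ has $\chi<k$. Then every graph of odd-girth $\ge g$ is $(k-1)$-colourable, so the colour classes $H_i$ of the Ramsey colouring of $K_N$ are $(k-1)$-colourable. Combining, $N=\chi(K_N)\le (k-1)^{k'}$. This is a bound on Ramsey numbers, not on $G$.

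To bring in $G$, run the construction inside $G$. Iteratively pick a colour $i$ and a large-chromatic piece, replacing $K_N$ by $G$ and the Ramsey colouring by a partition of $E(G)$ into $k'$ parts of odd-girth $\ge g$, built from an auxiliary optimal colouring of $G$ with $\chi(G)$ colours pulled back from a Ramsey colouring of $K_{\chi(G)}$. Edges between colour classes $a,b$ get colour $c(ab)$. Each part is homomorphic to some $H_i$ and so has odd-girth $\ge g$. The union of the parts is all of $G$ (no edges lie inside a class), so some part has chromatic number at least $\chi(G)^{1/k'}$.

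This is the key point and the main obstacle. We need $\chi(G)^{1/k'}\ge k$ while the Ramsey colouring of $K_{\chi(G)}$ exists, i.e. $\chi(G)<R_{k'}(\mathcal O_p)$. Choose $k'$ maximal with $R_{k'}(\mathcal{O}_p)>\chi(G)$; that is, pick $k'$ so that $\chi(G)$ just fits. By the lower bound, $k'\approx 3\log\chi(G)/\log^{(p)}\chi(G)$. Then
\[
\chi(G)^{1/k'}=\exp\!\bigl(\log\chi(G)/k'\bigr)\approx(\log^{(p-1)}\chi(G))^{1/3-o(1)}.
\]
This is at least $k$ once $\log^{(p-1)}\chi(G)\ge k^{3+\varepsilon}$, i.e. $\chi(G)\ge\exp^{(p-1)}(k^{3+\varepsilon})$.

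The indexing needs checking. The statement uses $\exp^{(p)}$, which gives slack; this suggests the relevant lower bound is with $\log^{(p-1)}$ applied to $k'$, not to $N$. In that case $\log N\approx (k'/3)\log^{(p)}k'$, so
\[
N^{1/k'}\approx (\log^{(p-1)} k')^{1/3}\approx(\log^{(p-1)}\log N)^{1/3}=(\log^{(p)}N)^{1/3}.
\]
Requiring this to be $\ge k$ gives $\chi(G)\ge\exp^{(p)}(k^{3+\varepsilon})$, matching the statement, with $\varepsilon$ absorbing the $o(1)$ terms for large $k$. For $g=5$ we have $p=1$, and this gives the triangle-free case $e^{k^{3+\varepsilon}}$.
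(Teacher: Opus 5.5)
Your construction is the paper's (Theorem~\ref{thm:meta} with $f=\chi$, $C=1$). You pull back a $k'$-edge-colouring of $K_{\chi(G)}$ with no monochromatic member of $\mathcal{O}_p$ along an optimal proper colouring of $G$. Each colour class then maps homomorphically to a colour class of $K_{\chi(G)}$, so it has odd-girth at least $g$. Then you use $\chi(G)\le\prod_i\chi(G_i)$. One slip: $k'$ must be chosen \emph{minimal}, not maximal, with $R_{k'}(\mathcal{O}_p)>\chi(G)$. Since $R_{k'}(\mathcal{O}_p)$ is nondecreasing in $k'$, no maximal such $k'$ exists. The abandoned exploratory paragraphs (e.g.\ the inference that every graph of odd-girth $\ge g$ would be $(k-1)$-colourable) are not valid and should simply be dropped.

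The genuine gap is the final quantitative step. Write $N=\chi(G)$. Minimality gives $R_{k'-1}(\mathcal{O}_p)\le N$, so Theorem~\ref{thm:ramsey} yields only the one-sided bound $\log N\ge(1/3-o(1))\,k'\log^{(p)}k'$, i.e.\ $N^{1/k'}\ge(\log^{(p-1)}k')^{1/3-o(1)}$. Your ``$\log N\approx(k'/3)\log^{(p)}k'$'' treats a lower bound on the Ramsey number as its true value. More importantly, your next step $(\log^{(p-1)}k')^{1/3}\approx(\log^{(p)}N)^{1/3}$ needs a \emph{lower} bound on $k'$ in terms of $N$, roughly $k'\ge(\log N)^{1-o(1)}$. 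Nothing in your argument supplies this, because the Ramsey lower bound only bounds $k'$ from above. If $R_{k'}(\mathcal{O}_p)$ were much larger than the lower bound, $k'$ could a priori be far smaller than $\log N$.

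The paper closes exactly this hole with the classical upper bound $R_K(\mathcal{O}_p)\le R_K(C_3)\le 3K!$. Assuming all $\chi(G_i)<k$, the lower bound gives $K<\exp^{(p-1)}(k^{3+\varepsilon/2})$. Then $\chi(G)<R_K(\mathcal{O}_p)\le 3K!<\exp^{(p)}(k^{3+\varepsilon})$, a contradiction. Alternatively you can avoid the upper bound by a case split:
\begin{itemize}
\item if $k'\le \log N/\log k$, then $N^{1/k'}\ge k$ directly;
\item otherwise $k'>\exp^{(p-1)}(k^{3+\varepsilon})/\log k$, so $\log^{(p-1)}k'\ge k^{3+\varepsilon/2}$ and $(\log^{(p-1)}k')^{1/3-o(1)}\ge k$.
\end{itemize}
With either fix, your argument becomes a complete proof.
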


\begin{cor}\label{cor:infinite}
Every infinite graph $G$ with infinite chromatic number contains a subgraph with infinite chromatic number and odd-girth at least $g$, for every odd integer $g\ge 5$.
\end{cor}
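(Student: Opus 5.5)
The plan is to reduce to the finite statement of \cref{thm:main} by compactness, and then glue together infinitely many vertex-disjoint finite witnesses. Gluing only disjoint pieces avoids creating new short odd cycles.

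\textbf{Step 1 (finite subgraphs).} Let $G$ be an infinite graph with $\chi(G)=\infty$, and fix an odd $g\ge 5$. By the de Bruijn--Erd\H{o}s compactness theorem, a graph all of whose finite subgraphs are $m$-colorable is itself $m$-colorable. Hence for every $m\in\bN$ and every graph $G'$ with $\chi(G')=\infty$, there is a finite subgraph of $G'$ with chromatic number at least $m$.

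\textbf{Step 2 (inductive disjoint construction).} Fix $k_0$ large enough that \cref{thm:main} applies (with, say, $\varepsilon=1$) for every $k\ge k_0$. We build finite, pairwise vertex-disjoint subgraphs $H_{k_0},H_{k_0+1},\dots$ of $G$, where each $H_k$ has $\chi(H_k)\ge k$ and odd-girth at least $g$.

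Suppose $H_{k_0},\dots,H_{k-1}$ have been chosen, and let $S$ be the union of their vertex sets, a finite set. Then $\chi(G-S)=\infty$, since any proper coloring of $G-S$ with $c$ colors extends to a proper coloring of $G$ with $c+|S|$ colors by giving each vertex of $S$ a new color. By Step~1, $G-S$ has a finite subgraph $F$ with $\chi(F)\ge \exp^{\left(\frac{g-3}{2}\right)}\!\bigl(k^{4}\bigr)$. Applying \cref{thm:main} to $F$ yields a subgraph $H_k\subseteq F$ with $\chi(H_k)\ge k$ and odd-girth at least $g$, and $H_k$ is disjoint from $S$ by construction.

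\textbf{Step 3 (gluing).} Let $H$ be the union of all the $H_k$, taking only the edges of the individual $H_k$ and no edges between different pieces. Then $H$ is a subgraph of $G$ whose connected components each lie inside a single $H_k$. Every cycle of $H$ is connected, so it lies inside some $H_k$, and therefore every odd cycle of $H$ has length at least $g$. Since $H\supseteq H_k$ for every $k\ge k_0$, we have $\chi(H)\ge k$ for all such $k$, so $\chi(H)=\infty$.

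Since $g$ was an arbitrary odd integer at least $5$, this proves the corollary. The only genuinely nontrivial input is the compactness step, which requires the axiom of choice for uncountable $G$. The main subtlety is to keep the pieces vertex-disjoint and not to add edges of $G$ between them, because doing so could create short odd cycles.
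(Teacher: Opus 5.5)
Your proposal is correct and follows essentially the same route as the paper. Both arguments inductively choose pairwise vertex-disjoint finite subgraphs of growing chromatic number and odd-girth at least $g$ inside $G$ minus the previously used finite vertex set, using de Bruijn--Erd\H{o}s and \cref{thm:main}, and then take their disjoint union; your explicit start at $k_0$ and your justification that $\chi(G-S)=\infty$ simply spell out steps the paper leaves implicit.
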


In fact, we generalize Theorem~\ref{thm:main} to a more general meta-theorem which yields that for a much larger family of graph parameters which we call \emph{$\chi$-amenable}, an analogous statement holds true. We defer the formal and slightly technical definition of $\chi$-amenability to Section~\ref{sec:meta}, and instead give several examples of such parameters.

\begin{thm}\label{thm:meta}
Let $f(\cdot)$ be a $\chi$-amenable graph parameter, let $g\ge5$ be a fixed odd integer and let $\varepsilon>0$. Then for every sufficiently large $k\in \mathbb{N}$, every graph $G$ with $f(G)\ge
 \exp^{\left(\frac{g-3}{2}\right)}\!\bigl(k^{3+\varepsilon}\bigr)$ contains a subgraph $G'$ with $f(G')\ge k$ and odd-girth at least $g$.
\end{thm}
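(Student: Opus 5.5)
We reduce the meta-theorem to the Ramsey-type lower bound for odd cycles announced in the abstract, $R_k(\mathcal{O}_p)\ge (\log^{(p-1)}k)^{k/3-o(k)}$, with $p=\frac{g-3}{2}$ so that $\mathcal{O}_p=\{C_3,\dots,C_{g-2}\}$. The basic mechanism is an edge-partition argument. Take a graph $H$ on $N$ vertices with a coloring of the edges of $K_N$ by $k$ colors such that no color class $E_i$ contains an odd cycle of length at most $g-2$. Given $G$, map its vertices to $V(K_N)$ via a proper coloring of $G$, or more generally via a homomorphism $\phi:G\to K_N$. Each edge $uv$ of $G$ inherits the color of $\phi(u)\phi(v)$, which partitions $E(G)=E_1'\cup\dots\cup E_k'$. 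Each $G_i'=(V(G),E_i')$ maps homomorphically to the $i$-th color class of $K_N$, so every odd closed walk in $G_i'$ of length less than $g$ would map to an odd closed walk of length less than $g$ in that color class, which must contain a short odd cycle. Hence each $G_i'$ has odd-girth at least $g$.

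The second step is to show that some $G_i'$ has large $f$. For $\chi$ this is the product bound $\chi(G)\le\prod_i\chi(G_i')$: the tuple of colorings is a proper coloring of $G$. So if $\chi(G)>(k-1)^{k}$ we would be done with only $k$ colors. That alone is weak, so the real argument must use a Ramsey coloring with many colors $m$ rather than $k$, chosen so that $N=R_m(\mathcal{O}_p)-1$ is at least $\chi(G)$. Then $\chi(G)\le\prod_{i\le m}\chi(G_i')$, which forces some $\chi(G_i')\ge \chi(G)^{1/m}$.

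To reach the stated bounds we need $\chi(G)^{1/m}\ge k$ while $N\ge\chi(G)$. We set $\chi(G)=k^m$, roughly. We then need $(\log^{(p-1)}m)^{m/3-o(m)}\ge k^m$, i.e.\ $\log^{(p-1)} m\ge k^{3+\varepsilon'}$, i.e.\ $m\approx\exp^{(p-1)}(k^{3+\varepsilon'})$. Then $\chi(G)=k^m=\exp^{(p)}(\cdot)$, with height $p=\frac{g-3}{2}$, and the extra logarithmic factors are absorbed by $\varepsilon$.

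For general $f$, $\chi$-amenability (Section~\ref{sec:meta}) should supply exactly the two properties used above. The first is an appropriate submultiplicativity $f(G)\le\prod_i f(G_i')$ (or a comparable inequality with polynomial losses) under edge partitions arising from homomorphisms to edge-colored complete graphs. The second is a way to bound $N$ in terms of $f(G)$, for instance a homomorphism or fractional analogue into a blow-up of $K_N$, replacing the proper coloring. We would invoke these axioms verbatim.

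The main obstacle is the Ramsey lower bound itself, whose proof is presumably an iterated product or lexicographic construction lifting the triangle case. We treat that bound as the earlier result and only need to check the parameter bookkeeping, in particular that the $o(m)$ and iterated-log losses fit inside $k^{3+\varepsilon}$ for large $k$.
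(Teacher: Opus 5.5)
Your reduction is the same as the paper's: pull back an edge-coloring of $K_\ell$ with no monochromatic member of $\mathcal{O}_p$ along a proper $\ell$-coloring, note that each pulled-back class has odd-girth at least $g$ via the homomorphism, and conclude with submultiplicativity and pigeonhole. The gap is in the final bookkeeping. As written, your $m\approx\exp^{(p-1)}(k^{3+\varepsilon'})$ depends on $k$ alone, and you then ``set $\chi(G)=k^m$''. But the hypothesis only bounds $f(G)$ from below. For fixed $m$ we have $R_m(\mathcal{O}_p)\le R_m(C_3)\le 3m!$, so once the number of colors $\ell$ exceeds $3m!$ there is no $m$-coloring of $K_\ell$ to pull back. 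For $f=\chi$ you could repair this by deleting vertices until the chromatic number is exactly $k^m$, but a general $\chi$-amenable $f$ has no such intermediate-value property. Moreover, you should use the actual second axiom in Definition~\ref{def:amenability} rather than guess it, and it is a \emph{subgraph} statement. It gives $G'\subseteq G$ with $f(G')\ge f(G)/C$ and $\chi(G')\le Cf(G')$, so the whole argument must run in $G'$ with $\ell=\lceil Cf(G')\rceil$. You cannot bound $\chi(G)$ by $f(G)$ itself, e.g.\ for $\chi_f$. And $f(G')$, hence $\ell$, is not bounded above in terms of $k$.

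The missing step is to let the number of colors depend on $\ell$. The paper takes $K$ least with $R_K(\mathcal{O}_p)>\ell$. If all $f(G_i)<k$, then $\ell\le 2Cf(G')<2Ck^K$, while minimality gives $R_{K-1}(\mathcal{O}_p)\le\ell$. The Ramsey lower bound then forces $K<\exp^{(p-1)}(k^{3+\varepsilon/2})$. Only at this point does the classical bound $R_K(\mathcal{O}_p)\le R_K(C_3)\le 3K!$ enter, giving $f(G)\le\ell<3K!<\exp^{(p)}(k^{3+\varepsilon})$, a contradiction.

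Alternatively, closer to your own formulation, put $m=\lfloor\log_{2Ck}\ell\rfloor$. The hypothesis on $f(G)$ gives $m\ge\exp^{(p-1)}(k^{3+\varepsilon/2})$. Since $\log^{(p-1)}$ is increasing, the lower bound yields $R_m(\mathcal{O}_p)>(2Ck)^{m+1}>\ell$ for \emph{every} such $m$. Then $\prod_i f(G_i)\ge f(G')\ge (2Ck)^m/(2C)$ gives some $f(G_i)\ge k$. This direct version even avoids the factorial upper bound. Either way, you need an argument that covers all $\ell$ above the threshold, not just $\ell\approx k^m$ for one fixed $m$.
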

The following result shows that in addition to the chromatic number itself, several other well-studied graph parameters are $\chi$-amenable. For each of the parameters, we reference a book, survey or some selected papers on this parameter, which the unfamiliar reader may consult at their convenience. We will give the formal definitions of these parameters in Section~\ref{sec:amenable}.
\begin{thm}\label{thm:amenable}
Each of the following graph parameters is $\chi$-amenable:
\begin{itemize}
\item the \emph{chromatic number} $\chi(\cdot)$,
    \item the \emph{fractional chromatic number} $\chi_f(\cdot)$~\cite{MR1481157},
    \item the \emph{Hall-ratio} $\rho(\cdot)$~\cite{MR1282556,CropperJacobsonGyarfasLehel2000,MR4208096}, and
    \item the \emph{strict vector-chromatic number} (a.k.a. Lov\'{a}sz-Theta function of the complement)~\cite{MR514926,MR1269161,MR1623197}, i.e., the graph parameter $\overline{\vartheta}(\cdot)$ defined as $\overline{\vartheta}(G)=\vartheta(\overline{G})$ where $\vartheta(\cdot)$ denotes the Lov\'{a}sz-Theta function.\footnote{We remark that this is a natural parameter to consider in this context, since Lov\'{a}sz' sandwich-theorem famously states that $\omega(G)\le \overline{\vartheta}(G)\le \chi(G)$, i.e., $\overline{\vartheta}(G)$ interpolates between the clique and chromatic number of a graph.}
\end{itemize}
\end{thm}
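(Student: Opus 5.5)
Since $\chi$-amenability is defined only in Section~\ref{sec:meta}, the plan is to check its defining axioms one parameter at a time. I expect these axioms to be: (i) monotonicity under taking subgraphs; (ii) a normalization of the form $\omega(G)\le f(G)\le \chi(G)$, or at least $f(G)\le\chi(G)$ with $f(K_n)=n$; and (iii) the key axiom, \emph{submultiplicativity under edge-unions}: if $G=G_1\cup G_2$ on a common vertex set, then $f(G)\le f(G_1)f(G_2)$. Axiom (iii) is what lets one combine a Ramsey colouring of the edges into $k$ classes, each without short odd cycles, into a bound on $f$. If the formal definition has further clauses, for instance behaviour under blow-ups or homomorphisms, I would handle them in the same parameter-by-parameter way, using the explicit descriptions of the parameters in Section~\ref{sec:amenable}.

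\emph{Monotonicity and normalization.} For $\chi$ and $\chi_f$ both properties are classical. For the Hall ratio $\rho(G)=\max_{H\subseteq G}|V(H)|/\alpha(H)$, monotonicity is immediate from the maximum over subgraphs. For the normalization, taking $H=K_n$ gives $\rho\ge\omega$, and $|V(H)|\le\chi(H)\alpha(H)$ gives $\rho\le\chi$. For $\overline{\vartheta}$ both properties are given by Lov\'asz' sandwich theorem, together with the fact that adding edges to $G$ removes edges from $\overline{G}$, which can only increase $\vartheta(\overline G)$.

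\emph{Submultiplicativity.}
\begin{itemize}
\item For $\chi$: take the product of a proper colouring of $G_1$ and one of $G_2$.
\item For $\chi_f$: take a fractional colouring $(w_I)$ of $G_1$ and $(w'_J)$ of $G_2$. Put weight $w_Iw'_J$ on each $I\cap J$, which is independent in $G_1\cup G_2$. Every vertex is then covered with weight at least $1$, and the total weight is $\chi_f(G_1)\chi_f(G_2)$.
\item For $\rho$: fix a subgraph $H\subseteq G_1\cup G_2$. First choose an independent set $S$ of $H\cap G_1$ with $|S|\ge |V(H)|/\rho(G_1)$. Then choose a set $T\subseteq S$ that is independent in the subgraph of $G_2$ induced on $S$, with $|T|\ge |S|/\rho(G_2)$. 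The set $T$ is independent in $H$, so $|V(H)|/\alpha(H)\le\rho(G_1)\rho(G_2)$.
\item For $\overline{\vartheta}$: use the orthonormal-representation formula $\vartheta(F)=\min\max_i 1/(c\cdot u_i)^2$, where the minimum is over unit vectors $u_i$ (for vertices $i$ of $F$) with $u_i\perp u_j$ whenever $ij\notin E(F)$, and over unit vectors $c$. We have $\overline{G_1\cup G_2}=\overline{G_1}\cap\overline{G_2}$, and a pair is a non-edge there exactly when it is a non-edge of $\overline{G_1}$ or of $\overline{G_2}$. Hence the tensor products $u_i\otimes v_i$ of optimal representations of $\overline{G_1}$ and $\overline{G_2}$, with handle $c\otimes c'$, form a valid representation. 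This gives $\vartheta(\overline{G_1\cup G_2})\le\vartheta(\overline{G_1})\vartheta(\overline{G_2})$.
\end{itemize}

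The main obstacle I expect is $\overline{\vartheta}$. The argument there must use the correct formulation of $\vartheta$: in the vector-colouring formulation the tensor trick is less transparent, because inner products of value $-1/(t-1)$ multiply to positive values. This forces a switch to the orthonormal-representation formulation, and its equivalence with strict vector colouring must be cited carefully. A secondary risk is that the actual definition in Section~\ref{sec:meta} may require a different normalization axiom, for example a bound on $f$ for graphs of bounded odd-girth or for bipartite graphs. For all four parameters such a bound would follow from $f\le\chi$ and $\chi(\text{bipartite})\le 2$.
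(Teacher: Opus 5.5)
\textbf{Gap: you verify the wrong second axiom.} Your submultiplicativity arguments are correct. For $\chi$, $\chi_f$ and $\rho$ they coincide with the paper's. For $\overline{\vartheta}$, your tensor-product construction on orthonormal representations of $\overline{G_1}$ and $\overline{G_2}$ is a valid self-contained proof of an inequality the paper simply cites.

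By Definition~\ref{def:amenability}, $\chi$-amenability consists of submultiplicativity and \emph{one} further condition: there is an absolute constant $C\ge 1$ such that every graph $G$ has a subgraph $G'$ with $f(G')\ge f(G)/C$ and $\chi(G')\le C f(G')$. Monotonicity and $\omega\le f\le\chi$ are not part of the definition, and they do not imply this condition, because $f\le\chi$ points the wrong way. For example, the Kneser graph $KG(3k,k)$ has $\chi_f=3$, hence $\rho\le 3$ and $\overline{\vartheta}\le 3$, but chromatic number $k+2$. So one really must pass to a subgraph that keeps a constant fraction of $f$ while having chromatic number $O(f)$.

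This is exactly what the proof of Theorem~\ref{thm:meta} uses. It properly colours $G'$ with $\ell=\lceil Cf(G')\rceil$ colours and pulls back a Ramsey colouring of $K_\ell$. With only a $\chi(G)$-colouring, the number $K$ of Ramsey colours is governed by $\chi(G)$, which can be arbitrarily large compared with $f(G)$, and $f(G)<k^K$ gives no contradiction. For $\chi_f$, $\rho$ and $\overline{\vartheta}$ this condition is the actual content of the theorem, and your proposal contains nothing towards it.

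\textbf{What has to be supplied.} Only $\chi$ is trivial, with $G'=G$ and $C=1$.

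For $\chi_f$, the paper takes optimal primal and dual solutions $(x_I)$, $(y_v)$ of value $z$. It samples $\lceil z\rceil$ independent sets from the distribution $x_I/z$ and lets $G'$ be the subgraph induced on their union. Then $\chi(G')\le\lceil z\rceil$, every vertex survives with probability at least $1-e^{-1}$, and the restriction of $(y_v)$ remains dual-feasible for $G'$. Hence some outcome has $\chi_f(G')\ge(1-e^{-1})z$.

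For $\rho$, the paper takes a subgraph $H$ attaining $z=|V(H)|/\alpha(H)$ and greedily removes $\lceil z\rceil$ maximum independent sets. By the definition of $\rho$, each one removes at least a $1/z$ fraction of what remains. So their union covers a $(1-e^{-1})$ fraction of $V(H)$, while its independence number is at most $\alpha(H)$.

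For $\overline{\vartheta}$, the real obstacle lies here, not in submultiplicativity. Start from $M\succeq 0$ with unit diagonal, zeros at non-edges and $\lambda_{\max}(M)=z$. The paper uses the Marcus--Spielman--Srivastava theorem (Theorem~\ref{thm:spielman}, via Lemma~\ref{lem:matrixpartition}) to partition $V(G)$ into $q=\lceil z\rceil$ parts $V_i$ with $\lVert M_{V_i}\rVert\le 4$. It deletes all edges inside the parts, so that $\chi(G')\le q$. It then shows that $M'=I+\frac14\bigl(M-\sum_i M_{V_i}\bigr)$ is feasible for $G'$ with $\lambda_{\max}(M')\ge z/4$.

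Without arguments of this kind, each with some absolute constant $C$, the theorem is not proved.
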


By Theorem~\ref{thm:meta}, for all these parameters, it is true that if they are large on some graph, then we can find a locally bipartite (i.e., large odd-girth) subgraph on which they are still large. To the best of our knowledge, this is a new result in each of the listed cases. The chromatic number case directly corresponds to Theorem~\ref{thm:main} (and so in particular, proving Theorems~\ref{thm:meta} and~\ref{thm:amenable} will automatically also prove Theorem~\ref{thm:main}). The case of the fractional chromatic number reproves the main result of Mohar and Wu~\cite{MR4356461} on the existence of triangle-free subgraphs of high fractional chromatic number, and provides an improved quantitative bound (dropping from tower-type to single-exponential). It also proves the odd-girth variant of their fractional analogue of the Erd\H{o}s-Hajnal conjecture (cf.~Conjecture~4 in~\cite{MR4356461}).

To prove Theorem~\ref{thm:meta}, we create a surprising link between the previously discussed problems and the problem of finding superexponential \emph{lower} bounds on the multicolor Ramsey number of the collection of the first $p$ odd cycles $\mathcal{O}_p:=\{C_3,C_5,\ldots,C_{2p+1}\}$, and along the way prove a novel bound on this Ramsey number. Concretely, extending a recent breakthrough by OpenAI~\cite{openai} for the case $p=1$ (i.e., for the multicolor Ramsey number of the triangle), we prove the following result. 
\begin{thm}\label{thm:ramsey}
Let $p\in \mathbb{N}$ be fixed. Then for all sufficiently large integers $k$ we have
$$R_k(\mathcal{O}_p)\ge 
 (\log^{(p-1)}k)^{k/3-o(k)}.$$
In particular, since $C_{2p+1}\in \mathcal{O}_p$, we have $R_k(C_{2p+1})\ge R_k(\mathcal{O}_p)\ge (\log^{(p-1)}k)^{k/3-o(k)}.$
\end{thm}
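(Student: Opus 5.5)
The plan is to build the coloring as a pullback of a product of small, carefully chosen graphs, and to prove the bound by induction on $p$, with the case $p=1$ taken from the construction of~\cite{openai}. The base case $p=1$ is exactly the triangle bound $R_k(C_3)\ge k^{k/3-o(k)}$, which we take as given. We do not fix one explicit coloring. Instead we use the following general template, which turns odd-girth information about small ``gadget'' graphs into Ramsey colorings.

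\textbf{The template.} Let $T_1,\dots,T_k$ be graphs of odd-girth larger than $2p+1$. Suppose $X\subseteq V(T_1)\times\dots\times V(T_k)$ is a set of tuples such that for any two distinct $x,y\in X$ there is a coordinate $i$ with $x_iy_i\in E(T_i)$. Such an $X$ is a clique in the co-normal product of the $T_i$. Color the edge $xy$ of $K_{|X|}$ by the least such $i$. Color class $i$ then maps homomorphically into $T_i$ via $x\mapsto x_i$. Homomorphic images of odd closed walks are odd closed walks, so this color class has no odd cycle of length at most $2p+1$. Hence $R_k(\mathcal{O}_p)>|X|$. Everything therefore reduces to finding gadgets $T_i$ of odd-girth $>2p+1$ whose co-normal product has a clique of size $(\log^{(p-1)}k)^{k/3-o(k)}$. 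I expect the construction of~\cite{openai} for triangles to be readable in exactly this language, with triangle-free gadgets and a structured rather than purely random clique.

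\textbf{Induction step.} Going from odd-girth $>2p+1$ to odd-girth $>2p+3$, I would replace each gadget by an iterated-shift-type construction. Vertices become increasing chains (or directed edges) of the previous gadget, with adjacency given by ``shifting''. This is in the spirit of how iterated shift graphs raise the odd-girth by two per iteration while the chromatic number, and more relevantly the size of the achievable co-normal clique, drops by one logarithm. Concretely, we lift the clique $X$ from level $p$ to level $p+1$ coordinatewise. At each level, the quantity that previously behaved like $k$ (the ``resolution'' of a single coordinate, which in the triangle case gives the base $k$ of $k^{k/3}$) must now only behave like $\log$ of its previous value. Meanwhile, the number of coordinates stays $k$ and the exponent $k/3-o(k)$ is preserved. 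Iterating $p-1$ times gives base $\log^{(p-1)}k$.

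\textbf{Main obstacle.} The hard part is checking that the lifted set is still a clique in the co-normal product with no loss in the exponent. We must show that for every two distinct lifted tuples some coordinate realizes a shift-edge, and not merely that the previous tuples were adjacent somewhere. Purely random choices only give exponential bounds, since graphs of large odd-girth have density below $1/2$. So I would first try to carry over verbatim the combinatorial (non-random) part of the triangle argument. The idea is to encode the previous level's tuples as chains in a linear order inside each coordinate, so that adjacency at level $p$ in coordinate $i$ automatically yields a shift-edge at level $p+1$ in the same coordinate. Only the $o(k)$ error term would then be lost in the bookkeeping.
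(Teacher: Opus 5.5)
There is a genuine gap: beyond the cited base case, the proposal never constructs anything. Your template is correct, but it is a reformulation rather than a reduction. Every $k$-coloring of $K_N$ with no monochromatic member of $\mathcal{O}_p$ already has this form: take $T_i$ to be the $i$-th color class itself and $X$ the diagonal. So all the content lies in the induction step, and there you state only an intention. You do not define the lifted gadgets (which orientation, which chains) or the lifted tuples. You do not show why two distinct lifted tuples are adjacent in some coordinate, or why the count comes out as $(\log^{(p)}k)^{k/3-o(k)}$; you yourself leave this ``main obstacle'' open. That the arc graph of an acyclic orientation gains two in odd-girth is the easy part. Moreover, the object you propose to induct on, the large level-$p$ clique lifted coordinatewise, is not what the recursion needs.

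The missing idea is the following. OpenAI's staged construction carries, for each primary color $c$, a proper labeling $L_{j,c}:V_j\to[j+1]$ of the $c$-edges with two properties. First, no $c$-edge joins two blocks on which $c$ is active. Second, if $v$ lies in a block where $c$ is missing, then all its $c$-neighbours inside any fixed active block carry one and the same label. The paper refines the color $c$ of an edge $uv$ to $(c,\tau(L_{j,c}(u)L_{j,c}(v)))$, where $\tau$ is an $\mathcal{O}_r$-free coloring of the small complete graph $K_{H+1}$ on the label set.

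A monochromatic odd cycle of length at most $2r+3$ then projects to a monochromatic closed odd walk under $\tau$. This forces length exactly $2r+3$ with pairwise distinct labels. Hence either the cycle lies inside one active block, which is excluded by induction over the stages, or it has exactly one vertex in a missing block. In the latter case its two cycle-neighbours lie in a common active block and receive equal labels, a contradiction. This yields $a_{r+1}(N_H)\le Ht\,a_r(H+1)$, and iterating with $\log N_H\sim M_H\log H$ gives the theorem.

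Note that the gain of two in odd-girth does \emph{not} come from a homomorphism into a gadget of larger odd-girth. The map $L_{j,c}$ sends color class $(c,b)$ only into $\tau^{-1}(b)$, whose odd-girth merely exceeds $2r+1$; for $r=0$ this is all of $K_{H+1}$. The extra two comes from the single-label property, which your template cannot see. Also, the lower-level coloring is applied only to the $(H+1)$-element label set, at the cost of a factor $a_r(H+1)$ in the number of colors, and not to the previous level's large clique.
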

Here, for a non-negative integer $i$ we denote by $\log^{(i)}(\cdot)$ the $i$-fold iterated natural logarithm, defined as $\log^{(0)}(x):=x$ and $\log^{(i)}(x):=\log(\log^{(i-1)}(x))$ for $i\ge 1$. 

In the following we give some context for Theorem~\ref{thm:ramsey} and place it within related work. The study of the asymptotic growth of Ramsey numbers is one of the oldest and yet still most active branches of combinatorics and graph theory. We refer the interested reader to~\cite{ramseysurvey,morris2026recentresultsramseytheory} for surveys on the rich subject. The particular case of estimating the growth of multicolor Ramsey numbers $R_k(F)$ of fixed graphs $F$, as the number $k$ of colors grows, has also received a lot of attention. In the following, we discuss specifically the problem of estimating the multicolor Ramsey number $R_k(C_\ell)$ where $\ell\ge 3$ is a fixed odd number and $k$ tends to infinity, which has been the subject of a significant body of prior work.

Until recently, the best asymptotic upper and lower bounds in this setting that hold for arbitrary odd $\ell\ge 3$ were 
$$(2+\delta_\ell)^k\le R_k(C_\ell)\le k^{\frac{2k}{\ell-1}+o(k)},$$ for all sufficiently large $k$ and some constant $\delta_\ell>0$ depending solely on $\ell$. The lower bound is due to Day and Johnson~\cite{DayJohnson2017} and the upper bound as stated was first proved by Axenovich et al.~\cite{AxenovichEtAl2026}, and subsequently improved further by Miyazaki et al.~\cite{miyazaki2026improvedramseyboundsgeneralized} and Huang et al.~\cite{huang2026newupperboundramsey}, however the bounds proved by the latter groups are still of the form $k^{\frac{2k}{\ell-1}+o(k)}$. See also~\cite{BondyErdos1973,ErdosGraham1975,Li2009,LinChen2019,JenssenSkokan2021,Chung1973,Radziszowski2011} for several earlier asymptotic bounds, bounds for different parameter regimes, and related work. 

The case $\ell=3$ has probably received the most attention, and it was a well-known open problem to determine whether or not $R_k(C_3)$ grows exponentially or super-exponentially in $k$. Until recently, the best lower bound for this case was $R_k(C_3)\ge 380^{k/5-O(1)}$ due to Ageron et al.~\cite{ageron2022newlowerboundsschur}. In a recent breakthrough, OpenAI broke this exponential barrier by proving the lower bound
\[
 R_k(C_3)\ge \left(\frac{c k^{1/3}}{\log k}\right)^k
\]
for an absolute constant $c>0$~\cite[Chapter~9, Theorem~1.1]{openai}. Together with the classic factorial upper bound for $R_k(C_3)$~\cite{MR67467} this determines $R_k(C_3)$ as $k^{\Theta(k)}$. Our Theorem~\ref{thm:ramsey} generalizes this result and obtains a superexponential lower bound for arbitrary odd cycles. Our proof builds on and modifies the construction introduced by OpenAI~\cite{openai}.

\paragraph*{Organization.} We start in Section~\ref{sec:meta} by giving a precise definition of the notion of $\chi$-amenability, and then go on to give the proof of our meta-theorem (Theorem~\ref{thm:meta}), conditional on assuming Theorem~\ref{thm:ramsey}. Subsequently, we prove Theorem~\ref{thm:ramsey} in Section~\ref{sec:ramsey}. Afterwards, in Section~\ref{sec:amenable} we prove Theorem~\ref{thm:amenable}, by showing that each of the graph parameters addressed by the theorem is indeed $\chi$-amenable. For the chromatic number $\chi(\cdot)$ itself this will be straightforward (and thus, using Theorem~\ref{thm:meta}, complete the proof of Theorem~\ref{thm:main}). For the other parameters however, $\chi$-amenability is non-trivial, and in each case requires a separate proof. Interestingly, the proof for the case of the fractional chromatic number uses strong duality of linear programming, and the proof for the case of the strict vector chromatic number uses a deep result from the resolution of the Kadison-Singer problem due to Marcus, Spielman and Srivastava~\cite{MR3374963}. Finally, we conclude the paper in Section~\ref{sec:conc} with some concluding remarks and open problems. For completness, we include the quick deduction of Corollary~\ref{cor:infinite} from Theorem~\ref{thm:main} in Appendix~\ref{app}.

\paragraph*{Notation and Terminology.} All graphs in this note are simple, and assumed to be finite by default unless we explicitly label them as infinite. For a graph $G$ we denote by $V(G)$ its vertex-set, by $E(G)$ its edge-set, by $\overline{G}$ its complement and by $\chi(G)$ its chromatic number. Given two graphs $G$ and $H$, we denote by $G\cup H$ the union of $G$ and $H$, i.e. the graph with vertex-set $V(G)\cup V(H)$ and edge-set $E(G)\cup E(H)$. For a finite set $V$, we denote by $K_V$ the complete graph on vertex-set $V$, and refer to all such graphs where $|V|=n$ as $K_n$ if we focus only on isomorphism-invariant properties. For two sets $X$ and $Y$, we denote their symmetric difference by $X\Delta Y$. We also use the symbol $\sqcup$ to denote disjoint set union. All logarithms in this paper are natural, and we use the standard notation $\bN=\{1,2,\ldots\}$ and $[n]=\{1,\ldots,n\}$ for $n\in \bN$, and put $[0]:=\emptyset$.

For a number $d\in \mathbb{N}$, we denote by $\mathbb{R}^{d\times d}$ the space of real $d\times d$-matrices, and by $\mathrm{Sym}_d\subseteq \mathbb{R}^{d\times d}$ the set of all symmetric $d\times d$-matrices. We use the corresponding notation $\mathbb{R}^{S\times S}$ and $\mathrm{Sym}_S$ when we index the rows and columns of our matrices by a finite set $S$. By $I_d$ (or $I_S$, respectively), we denote the identity matrix. A matrix $A\in \mathbb{R}^{d\times d}$ or $A\in \mathbb{R}^{S\times S}$ is \emph{positive semidefinite} if $A$ is symmetric and $\mathbf{x}^\top A\mathbf{x}\ge 0$ for all $\mathbf{x}\in \mathbb{R}^d$ (or $\mathbf{x}\in \mathbb{R}^S$, respectively). We denote this fact by $A\succeq 0$. For any pair of symmetric matrices $A, B\in \mathbb{R}^{d\times d}$ (or $A, B\in \mathbb{R}^{S\times S}$, respectively), we use the notation $A\preceq B$ (or equivalently $B\succeq A$) to indicate the fact that $B-A\succeq 0$. It is well-known and easy to verify that $\preceq$ forms a partial order on $\mathrm{Sym}_d$ (or $\mathrm{Sym}_S$, respectively). Finally, given a matrix $A\in \mathrm{Sym}_d$ (or $A\in \mathrm{Sym}_S$, respectively), we denote by $\lVert A\rVert$ the \emph{spectral norm} and by $\lambda_\mathrm{max}(A)$ the largest eigenvalue of $A$. For every $A\in \mathrm{Sym}_d$ (or $A\in \mathrm{Sym}_S$), it is also well-known that $\lambda_\mathrm{max}(A)$ equals the maximum of the Rayleigh-quotient $\frac{\mathbf{x}^\top A\mathbf{x}}{\mathbf{x}^\top \mathbf{x}}$ over all non-zero (or unit) vectors $x$. Furthermore, we have $\lVert A\rVert=\lambda_\mathrm{max}(A)$ provided $A$ is positive semidefinite.

\paragraph*{AI Disclosure.} We have used ChatGPT 5.6 Pro for proofreading and catching typos. The idea for the proof of Theorem~\ref{thm:ramsey} is due to ChatGPT 5.6 Pro, and a first draft of the proof of that theorem was produced by the same system. However, this initial draft was subsequently heavily restructured and rewritten by the author, and the final proof included in Section~\ref{sec:ramsey} has been carefully checked and edited for correctness and readability by the author. Furthermore, ChatGPT 5.6 Sol Ultra provided a first proof of Lemma~\ref{lem:vartheta}. This initial proof was then significantly simplified by the author and written down in the revised form presented here by the author.

All remaining parts and results of this paper are fully due to the author without any assistance from AI, both in terms of idea-generation and writing. This includes (but is not limited to) identifying and formulating all the statements of the lemmas, theorems and corollaries in this paper, discovering the connection between multicolor Ramsey bounds  of odd cycles and Conjecture~\ref{con:moharwu} as well as the improved bounds for $f_4(k)$, discovering the proofs of Theorems~\ref{thm:main} and~\ref{thm:meta} (as presented in Section~\ref{sec:meta}), coming up with the notion of $\chi$-amenability, and discovering the proofs presented in Section~\ref{sec:amenable}, with the already mentioned exception that an earlier version of the proof of Lemma~\ref{lem:vartheta} is due to ChatGPT 5.6 Sol Ultra.

The high-level vision and direction of the research process underlying the presented results is fully due to the author. Furthermore, the author takes full responsibility for the correctness, presentation and originality of the entire manuscript.

\section{Proof of Theorem~\ref{thm:meta}}\label{sec:meta}
In this section, we give the definition of the notion of a \emph{$\chi$-amenable} graph parameter, and then proceed to prove our meta-theorem, namely Theorem~\ref{thm:meta}, assuming Theorem~\ref{thm:ramsey}.

In the following, a \emph{graph parameter} for us will simply be a function $f$ assigning to every finite graph $G$ a non-negative real number $f(G)$. Roughly speaking, $\chi$-amenable parameters are submultiplicative when taking graph unions, and have the property that if they take on a large value on some graph, then we can find a subgraph where the parameter is still relatively large, while the chromatic number of that subgraph is not much larger than its parameter.

\begin{defi}\label{def:amenability}
Let $f$ be a graph parameter. We say that $f$ is \emph{$\chi$-amenable} if the following two conditions are satisfied.
\begin{enumerate}
    \item For every two graphs $G_1$ and $G_2$ on the same vertex-set, we have $$f(G_1\cup G_2)\le f(G_1)f(G_2).$$
    \item There exists an absolute constant $C\ge 1$ such that every graph $G$ has a subgraph $G'\subseteq G$ with $f(G')\ge f(G)/C$ and $\chi(G')\le Cf(G')$. 
\end{enumerate}
\end{defi}

With the definition of $\chi$-amenability at hand, we are now ready to present the proof of Theorem~\ref{thm:meta}, conditional on the Ramsey-theoretic result stated in Theorem~\ref{thm:ramsey}.

\begin{proof}[Proof of Theorem~\ref{thm:meta}, assuming Theorem~\ref{thm:ramsey}]
In the following, let a $\chi$-amenable graph parameter $f(\cdot)$, an odd number $g\ge 5$ as well as some $\varepsilon>0$ be fixed. Clearly, since the statement we are trying to prove becomes stronger the smaller we choose $\varepsilon$, we may w.l.o.g. assume $\varepsilon\in (0,1)$ in the following. Let $C\ge 1$ be a constant for which $f$ satisfies the second condition in the definition of $\chi$-amenability. Let $k\in \mathbb{N}$ be any given, sufficiently large, integer. Let $p\in \mathbb{N}$ be such that $2p+1=g-2$. Note then that any graph has odd-girth at least $g$ if and only if it is $\mathcal{O}_p$-free.

Now let us consider any graph $G$ such that $f(G)\ge \exp^{\left(\frac{g-3}{2}\right)}(k^{3+\varepsilon})$. Our goal is to prove that $G$ contains a subgraph of odd-girth at least $g$ on which $f$ takes on a value of at least~$k$.

We first note that by our choice of $C$ there exists a subgraph $G'$ of $G$ such that $f(G')\ge  f(G)/C$ and $\chi(G')\le Cf(G')$. In the following, we set $\ell:=\lceil Cf(G')\rceil$. Let us fix a proper coloring $c:V(G')\rightarrow [\ell]$ of $G'$ using at most $\ell$ colors. Let $K\in \mathbb{N}$ be the smallest integer such that $R_K(\mathcal{O}_p)>\ell$  (such a $K$ exists since $R_K(\mathcal{O}_p)\rightarrow \infty$ as $K\rightarrow \infty$, as demonstrated e.g. by Theorem~\ref{thm:ramsey}). Note that since $\ell\rightarrow\infty$ as $k\rightarrow \infty$, with $k$ we may also assume $\ell$, and thus also $K$, to be sufficiently large. By definition of the Ramsey number, $R_K(\mathcal{O}_p)>\ell$ implies that there exists an edge-coloring of the complete graph $K_\ell$ on vertex-set $[\ell]$ with $K$ colors such that there are no monochromatic subgraphs in $\mathcal{O}_p$. Let $H_1,\ldots,H_K$ be the spanning subgraphs of $K_\ell$ such that $H_i$ contains exactly all the edges of color $i$, for $i=1,\ldots,K$. Then each $H_i$ has odd-girth at least $g$. Finally, for each $i\in [K]$ let us define $G_i$ as the spanning subgraph of $G'$ where for every edge $uv\in E(G')$ we have $uv\in E(G_i)$ if and only if $c(u)c(v)\in E(H_i)$. 

We can then observe that for every $i\in [K]$ the graph $G_i$ also has odd-girth at least~$g$: The color-map $c:V(G_i)=V(G')\rightarrow [\ell]=V(H_i)$, by definition of $G_i$, forms a homomorphism from $G_i$ to $H_i$. If $G_i$ contained an odd cycle of length less than $g$, then the image of that cycle under $c$ would yield a closed odd walk of length less than $g$ in $H_i$, within which we could find an odd cycle of length less than $g$ in $H_i$, a contradiction since $H_i$ has odd-girth at least $g$. In the remainder of this proof, our goal is to show that $f(G_i)\ge k$ for some $i$, which will then demonstrate that $G'$, and thus $G$, indeed has a subgraph of odd-girth at least $g$ on which $f$ takes on a value of at least $k$, as desired. 

Towards a contradiction, suppose that $f(G_i)<k$ for every $i$. Then, since $G'$ is the edge-disjoint union of the graphs $G_1,\ldots,G_K$ and since $f$ is $\chi$-amenable, we obtain that:
$$\ell\le 2Cf(G')\le 2C\prod_{i=1}^{K}f(G_i)<2Ck^K.$$ 

By our choice of $K$, we have $R_{K-1}(\mathcal{O}_p)\le \ell$. Using Theorem~\ref{thm:ramsey} and that $K$ is sufficiently large, this implies that
$$(\log^{(p-1)} K)^{K/3-\frac{\varepsilon}{100}\cdot K}\le (\log^{(p-1)} (K-1))^{(K-1)/3-o(K-1)}\le R_{K-1}(\mathcal{O}_p)\le \ell<2Ck^{K}.$$

Taking the $K$-th root, we find that 

$$(\log^{(p-1)} K)^{1/3-\varepsilon/100}<(1+o(1))k$$ and thus $$K<\exp^{(p-1)}\left(k^{1/(1/3-\varepsilon/100)+o(1)}\right)\le \exp^{(p-1)}\left(k^{3+\varepsilon/2}\right).$$ 
Using the well-known upper bound $R_K(\mathcal{O}_p)\le R_K(C_3)\le 3K!$ (cf.~\cite{MR67467}), we now obtain 

$$\exp^{(p)}(k^{3+\varepsilon})=\exp^{\left(\frac{g-3}{2}\right)}(k^{3+\varepsilon})\le f(G)\le Cf(G')\le \ell<R_K(\mathcal{O}_p)\le 3K!\le 3\cdot K^K$$ $$<\exp(K^{1+\varepsilon/100})\le \exp\left(\left(\exp^{(p-1)}(k^{3+\varepsilon/2})\right)^{1+\varepsilon/100}\right)<\exp^{(p)}(k^{3+\varepsilon}),$$ the desired contradiction\footnote{The final inequality holds for all sufficiently large $k$: when $p=1$, it follows from the inequality $(3+\varepsilon/2)(1+\varepsilon/100)<3+\varepsilon$,
while for $p\ge 2$ a fixed power of $\exp^{(p-1)}(k^{3+\varepsilon/2})$ is asymptotically dominated by $\exp^{(p-1)}(k^{3+\varepsilon})$.}. This shows that indeed, $G$ must have a subgraph of odd-girth at least $g$ on which $f$ takes a value of at least $k$, concluding the proof.
\end{proof}

\section{Multicolor Ramsey numbers of odd cycles}\label{sec:ramsey}
In this section, we present the proof of our Ramsey-theoretic result, namely Theorem~\ref{thm:ramsey}. To prove this lower bound, we draw from the ideas of the recent recursive construction of triangle-free edge-colorings of large complete graphs by OpenAI~\cite{openai} which they used for the superexponential lower bound on $R_k(C_3)$, and modify this construction for the purpose of avoiding short monochromatic odd cycles. In the next subsection, we first collect the key auxiliary results from~\cite{openai} that we shall be reusing. In the second subsection we then present our modified construction--first on a high level, explaining the rough structure of our new edge-coloring, how it relates to OpenAI's construction, and where the new idea for extending from the exclusion of triangles to the exclusion of short odd cycles lies. Afterwards, we then formally prove a key technical recursion (Lemma~\ref{lem:recursive-amplification}) which can be easily used to prove the desired lower bound on $R_K(\mathcal{O}_p)$ by induction on $p$.

\subsection{Auxiliary results}
In this preliminary section we recall two auxiliary results that were also used in the proof of the lower-bound for multicolor Ramsey numbers by OpenAI~\cite{openai}. 
The following two statements correspond exactly to Lemma~2.2 and Lemma~2.3 in the OpenAI paper (cf.~\cite{openai}, Chapter~9). The first lemma originates from lower bounds on the hat guessing numbers of complete bipartite graphs (cf.~\cite{alon}), while the second lemma is a simple statement about the existence of a large family of equal-size subsets of a finite ground set with pairwise large differences.

\begin{lem}\label{lem:cover}
Let $H$ be a sufficiently large integer, and set
\[
 m=\ceil{2H\log H},\qquad s=m(m+1)+1.
\]
Then there are fixed maps
\[
 f,g:[H]^s\longrightarrow[H]^s
\]
such that, for every $x,y\in[H]^s$, there is a $d\in[s]$ for which
\[
 x_d=(f(y))_d\qquad\text{or}\qquad y_d=(g(x))_d.
\]
\end{lem}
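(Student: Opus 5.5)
The plan is to read the statement as a hat-guessing statement on the complete bipartite graph $K_{s,s}$ with $H$ colours. The left players wear the hats $x_1,\dots,x_s$ and the right players wear $y_1,\dots,y_s$. Each left player sees only the right hats and guesses $(f(y))_d$; each right player sees only the left hats and guesses $(g(x))_d$. We must exhibit deterministic strategies $f,g$ under which, for every hat assignment $(x,y)$, at least one player guesses correctly. I would follow the Alon et al.\ style argument~\cite{alon} and build the strategies so that the two sides share the work.

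\emph{Step 1: block structure.} Write $s=m(m+1)+1$ as $m+1$ blocks $B_0,\dots,B_m$ of size $m$, plus one special coordinate $d^\ast$. The left side will try to guess right whenever $x$ has a certain ``typical'' form relative to $y$. The right side is responsible for the remaining, atypical $x$.

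\emph{Step 2: choosing $m$.} The choice $m=\lceil 2H\log H\rceil$ is exactly what makes
\[
(1-1/H)^m\le e^{-2\log H}=H^{-2}.
\]
In words: $m$ players guessing a hidden value in $[H]$ independently and uniformly all miss with probability at most $H^{-2}$. I would first choose the left strategy so that, for each fixed $y$, the set $A(y)$ of $x$ on which every left player misses is tiny or structured. A natural choice is to require $x|_{B_j}$ to avoid the prescribed guesses $f(y)|_{B_j}$ in every block.

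\emph{Step 3: the right strategy.} Next I would design $g$ so that from $x$ alone the right players can ``decode'' a short list of candidate values for $y$ on some block. This should be possible precisely because $x\in A(y)$ forces $x$ to carry information about $f(y)$. The $m+1$ blocks then act as $m+1$ independent chances, so that some right player $d$ ends up with $y_d=(g(x))_d$. The special coordinate $d^\ast$ is there to absorb a leftover case, by a pigeonhole step over the $m+1$ blocks.

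\emph{Step 4: derandomization.} If a direct construction proves awkward, I would instead pick $f$ and $g$ randomly in the structured form of Steps 1--3. I would then bound, by a union bound over the structured ``bad'' events (not over all $H^{2s}$ pairs $(x,y)$, which is far too many), the probability that some $(x,y)$ defeats everyone. The aim is to get this probability below $1$ using the $H^{-2}$ estimate from Step 2, once $H$ is sufficiently large.

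The main obstacle is Step 3, and with it the exact encoding. The naive union bound is hopeless, since each configuration fails with probability about $(1-1/H)^{2s}$ but there are $H^{2s}$ configurations. The proof must therefore use the adaptivity of $f$ and $g$ so that the right players really exploit the information leaked by a failing left side. The parameters $s=m(m+1)+1$ strongly suggest that $m$ blocks of $m$ are each handled by a ``$\ge m$ independent tries'' argument, with one extra block plus one coordinate for pigeonhole. Pinning down this encoding is where I expect most of the effort to go.
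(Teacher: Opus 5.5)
Your proposal is a plan, not a proof. The decisive step, your Step~3 (how $g$ exploits a failing left side), is left open, and the scaffolding around it points the wrong way.

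\begin{itemize}
\item In Step~2 you ask that, for fixed $y$, the set $A(y)$ of $x$ on which all left players miss be tiny or structured. But $A(y)=\prod_{d}([H]\setminus\{(f(y))_d\})$ is always a box of size exactly $(H-1)^s$, whatever $f$ is. The object that must be made small is the dual one: for fixed $x$, the set of $y$ on which all left players miss.
\item The value $s=m(m+1)+1$ does not come from $m+1$ blocks of size $m$ plus a pigeonhole coordinate.
\item No randomness in $g$ is needed.
\end{itemize}
For reference, the paper does not prove this lemma itself. It imports it as Lemma~2.2 of~\cite{openai}, which goes back to the hat-guessing bounds of~\cite{alon}.

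The missing idea is as follows. Let $f(y)$ depend only on $z=(y_1,\dots,y_m)$, that is, $f(y)=F(z)$ for a map $F:[H]^m\to[H]^s$ chosen uniformly at random.
\begin{itemize}
\item Fix $m+1$ distinct $z$'s and a coordinate $d\in[s]$. The $m+1$ values $F(z)_d$ are i.i.d.\ uniform, so they miss some color with probability at most $H(1-1/H)^{m+1}\le H\cdot H^{-2}=H^{-1}$. This is where $m\ge 2H\log H$ enters.
\item By independence, all $s$ coordinates fail with probability at most $H^{-s}$.
\item A union bound over the at most $H^{m(m+1)}$ such $(m+1)$-sets gives $H^{m(m+1)-s}=H^{-1}<1$. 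This is exactly where $s=m(m+1)+1$ comes from. Your union bound thus runs over $(m+1)$-subsets of $[H]^m$, not over pairs $(x,y)$.
\end{itemize}
So fix $F$ such that every $m+1$ distinct $z$'s have a coordinate $d$ at which their $F$-values exhaust $[H]$. Then for each $x$ the list $Z_x=\{z: F(z)_d\ne x_d\ \forall d\}$ has at most $m$ elements. Otherwise, at the covering coordinate $d$ of any $m+1$ of its members, one of them would satisfy $F(z)_d=x_d$.

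Now define $g$ deterministically by a diagonal trick. Write $Z_x=\{z^{(1)},\dots,z^{(k)}\}$ with $k\le m$, and set $(g(x))_e=z^{(e)}_e$ for $e\le k$ (arbitrary otherwise). If all left players miss on $(x,y)$, then $(y_1,\dots,y_m)=z^{(e)}$ for some $e$, and right player $e$ guesses correctly.

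Your intuition that the right side should decode a short list is correct. What the proposal lacks is the random-$F$ covering argument that bounds the list length by $m$, and the diagonal hitting of such a list by $m$ players. Without these it does not establish the lemma.
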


\begin{lem}\label{lem:palettes}
Let $H$ be a sufficiently large integer, and set
\[
 m=\ceil{2H\log H},\qquad s=m(m+1)+1, \qquad  t=s\ceil{\log H},\qquad M_j=jt\quad(0\le j\le H).
\]
Then, for every $1\le j\le H$, there is a family of \emph{palettes}
\[
 \mathcal{P}_j\subseteq\binom{[jt]}{t}
\]
of size 

\[
 B_j:=|\mathcal{P}_j|\ge
 \frac{\binom{jt}{t}}
 {\displaystyle\sum_{d=0}^{s-1}\binom{t}{d}\binom{(j-1)t}{d}}
 \ge \frac{j^t}{s\bigl(e^2j\ceil{\log H}^{\,2}\bigr)^s},
\]

such that distinct $P,Q\in\mathcal{P}_j$ satisfy
\[
 |P\setminus Q|=|Q\setminus P|\ge s.
\]
\end{lem}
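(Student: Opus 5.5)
We need a greedy/Gilbert–Varshamov-type argument for Lemma~\ref{lem:palettes}. All palettes are $t$-subsets of $[jt]$, so $|P\setminus Q|=t-|P\cap Q|=|Q\setminus P|$ holds automatically. The only requirement is $|P\setminus Q|\ge s$, that is, $|P\cap Q|\le t-s$.

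Build $\mathcal{P}_j$ greedily. Choose any $t$-set not yet excluded, add it to the family, then exclude every $t$-set $Q$ with $|P\setminus Q|\le s-1$. For a fixed $P$, such a $Q$ is obtained by removing a $d$-subset of $P$ and adding a $d$-subset of $[jt]\setminus P$, for some $0\le d\le s-1$. So at most $\sum_{d=0}^{s-1}\binom{t}{d}\binom{(j-1)t}{d}$ sets are excluded per chosen palette. The process continues until all $\binom{jt}{t}$ sets are exhausted, which gives the first inequality for $B_j$. For $j=1$ the bound reads $B_1\ge 1$, which is trivially fine since the $d=0$ term equals $1$ and the single palette $[t]$ works.

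For the second inequality we need explicit estimates.

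\textbf{Numerator.} Use $\binom{jt}{t}\ge (jt/t)^t=j^t$.

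\textbf{Denominator.} Use $\binom{t}{d}\binom{(j-1)t}{d}\le (et/d)^d\,(e(j-1)t/d)^d\le (e^2 j t^2/d^2)^d$. The function $d\mapsto (A/d^2)^d$ with $A=e^2jt^2$ is increasing while $d\le \sqrt{A}/e=\sqrt{j}\,t$. Since $s\le t$, this covers the whole range $d\le s-1$, so each term is at most $(e^2 j t^2/s^2)^s$. Here we use $s$ rather than $s-1$, which is valid since $s-1<\sqrt{j}\,t$ as well. Now $t=s\lceil\log H\rceil$ gives $t^2/s^2=\lceil\log H\rceil^2$. There are $s$ terms, so the denominator is at most $s\,(e^2 j\lceil\log H\rceil^2)^s$.

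The $d=0$ term and the use of $(et/d)^d$ at small $d$ need the convention $(x/0)^0=1$. Monotonicity handles this cleanly if the bound is stated instead as $\binom{t}{d}\binom{(j-1)t}{d}\le \binom{t}{s}\binom{jt}{s}$, which holds because $s\le t/2$ places $d$ in the increasing range of both binomials. Then apply $\binom{t}{s}\binom{jt}{s}\le (et/s)^s(ejt/s)^s=(e^2 j\lceil\log H\rceil^2)^s$. This is the cleanest route, and checking it is the main (mild) obstacle. It only requires $\lceil\log H\rceil\ge 2$, which holds for $H$ large.
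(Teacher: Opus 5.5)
Your proof is correct. The greedy packing argument gives the first bound, and $\binom{jt}{t}\ge j^t$ together with $\binom{t}{d}\binom{(j-1)t}{d}\le\binom{t}{s}\binom{jt}{s}\le (e^2 j\lceil\log H\rceil^2)^s$ for $0\le d\le s-1$ (valid since $t\ge 2s$) gives the second. The paper does not prove this lemma itself but imports it from the OpenAI paper (Lemma~2.3 there); the first bound is $\binom{jt}{t}$ divided by the size of a radius-$(s-1)$ ball of $t$-sets, which is exactly the Gilbert--Varshamov greedy argument you give, so your route is evidently the intended one.
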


These are the only auxiliary results from~\cite{openai} that we shall need. In the next section, we give a self-contained proof of Theorem~\ref{thm:ramsey} based on these statements. 

\subsection{Proof of Theorem~\ref{thm:ramsey}}
In this section, we prove Theorem~\ref{thm:ramsey}. As mentioned previously, the construction is a modification of the construction for the triangle-case from~\cite{openai}, and the main change comes from refining their recursive colorings by a second coordinate, which allows us to successively increase the odd-girth of the color classes by $2$ with every application of the construction. Despite the large overlap with OpenAI's proof, we chose to give a self-contained proof of Theorem~\ref{thm:ramsey} here (quoting only Lemmas~\ref{lem:cover},~\ref{lem:palettes} and repeating large parts of the arguments from~\cite{openai}). The reason is that various properties of their colorings will be needed that are not explicitly stated outside of proofs in~\cite{openai}. 

Put $\mathcal{O}_0=\emptyset$. For $r\ge0$ and $n\ge1$, let $a_r(n)$ be defined as the least integer $a\in\mathbb{N}$ such that there is an edge-coloring of $K_n$ in $a$ colors with no monochromatic member of $\mathcal{O}_r$ (this can be seen as an integer inverse function of the multicolor Ramsey number of $\mathcal{O}_r$). Thus $a_0(n)=1$, and $a_r(n)$ is nondecreasing in $n$. The central new step to proving Theorem~\ref{thm:ramsey} is captured by the recursion stated in the key lemma~\ref{lem:recursive-amplification} below. It essentially shows that one can construct, starting from an edge-coloring of a small complete graph without a monochromatic odd cycle of length at most $2r+1$, an edge-coloring of a much larger complete graph without a monochromatic odd cycle of length at most $2r+3$ and which does not use many more colors than the original coloring.

\begin{lem}\label{lem:recursive-amplification}
Fix an integer $r\ge0$. Let $H$ be a sufficiently large integer, and define
\[
 m=\ceil{2H\log H},\qquad s=m(m+1)+1, \qquad
t=s\ceil{\log H}\] as well as
$$
 N_H=\left\lceil
 \frac{(H!)^{t-s}}
 {s^H\bigl(e^2\ceil{\log H}^{\,2}\bigr)^{sH}}
 \right\rceil.
$$
Then
\begin{equation}\label{eq:recursion}
 a_{r+1}(N_H)\le Ht\,a_r(H+1).
\end{equation}
\end{lem}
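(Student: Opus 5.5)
The plan is to build an explicit edge-coloring of $K_{N_H}$ with $Ht\cdot a_r(H+1)$ colors and no monochromatic member of $\mathcal{O}_{r+1}$. It follows the recursive OpenAI construction for triangles, with each colour refined by a second coordinate taken from an optimal $\mathcal{O}_r$-free colouring $\phi$ of $K_{\{0,1,\ldots,H\}}$ with $a_r(H+1)$ colours.

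\textbf{Vertex set.} Take the vertices to be tuples $x=(P_1,\ldots,P_H)$ with $P_j\in\mathcal{P}_j$, where the palettes come from Lemma~\ref{lem:palettes}. Their number is $\prod_j B_j\ge \prod_j j^t/(s(e^2j\lceil\log H\rceil^2)^s)$, which simplifies to at least $N_H$. Since $a_{r+1}$ is monotone, it suffices to colour this complete graph.

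\textbf{Base colour $(j,c)$.} For distinct $x,y$, let $j$ be the largest level at which their palettes differ. By Lemma~\ref{lem:palettes}, $|P_j\setminus Q_j|,|Q_j\setminus P_j|\ge s$. Fix the first $s$ elements of each difference set and encode the lower levels of $x$ and $y$ as strings in $[H]^s$. Lemma~\ref{lem:cover} then gives a coordinate $d\in[s]$ that selects an element $c\in P_j\Delta Q_j\subseteq[jt]$. The pair $(j,c)$ ranges over at most $\sum_j jt\le H\cdot Ht$ values. To reach the exact count $Ht$, I will split the range into blocks of size $t$, or equivalently record only the position of $c$ inside the palette, which lies in $[t]$. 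The bookkeeping here follows OpenAI's proof and I expect it to be routine.

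\textbf{Refinement by $\phi$.} The edge $xy$ receives the colour $(j,c,\phi(\alpha(x),\alpha(y)))$. Here $\alpha(x)\in\{0,\ldots,H\}$ is an index recording which vertex carries $c$ in its palette and at which level $c$ first appears in its palette sequence, with index $0$ meaning ``not present''.

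\textbf{Structural claim.} Within a fixed base class $(j,c)$, every edge joins a vertex whose level-$j$ palette contains $c$ to one whose palette does not. Moreover, edges between two vertices on the same side of this split can only arise in a controlled way, and the cover property of Lemma~\ref{lem:cover} is exactly what forbids a triangle in OpenAI's setting. This is what makes each base class ``almost bipartite''. I expect to extract from the cover property that every monochromatic closed walk in a base class induces, via $\alpha$, a closed walk in $K_{\{0,\ldots,H\}}$ of the same colour under $\phi$.

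\textbf{Length-drop step (main obstacle).} An odd cycle of length at most $2r+3$ that is monochromatic in the refined colouring must be shown to map to a closed odd walk of length at most $2r+1$ in a single colour class of $\phi$. Such a walk contains an odd cycle of length at most $2r+1$, contradicting the choice of $\phi$. The key point is that the near-bipartition forces at least two of the cycle's edges to contract, for instance two consecutive vertices sharing the same $\alpha$-value, or an edge mapping to a loop which is then removed. This must shorten the image walk by exactly $2$ while preserving its parity. My first attempt will be to define $\alpha$ so that the crossing edges of the bipartition map to genuine edges of $K_{H+1}$. The one odd ``defect'' the cover lemma permits would then be forced to appear twice along any odd cycle, and those two occurrences can be cancelled. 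If that fails, I will instead add the level $j$ into $\alpha$ and argue by the maximality of $j$ along the cycle.
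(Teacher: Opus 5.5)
Your proposal has a genuine gap in the base colouring. As written, the colour $(j,c)$ with $c\in[jt]$ keeps the levels apart, so you use $\sum_j jt\sim H^2t/2$ colours. That is a factor of order $H$ more than the $Ht$ the lemma allows. Since $\log N_H=\Theta(Ht\log H)$, for large $H$ this even exceeds the $\lceil\log_2 N_H\rceil$ colours that trivially make every class bipartite. In fact every class $(j,c)$ in your scheme already is bipartite: split the vertices by whether $c\in P_j(x)$. So neither Lemma~\ref{lem:cover} nor the refinement by $\phi$ does any work, which shows that bookkeeping cannot fix the count. Your proposed repair, recording only the position of $c$ inside its palette, merges unrelated classes $(j,c)$ and $(j,c')$, and nothing in your argument then excludes even monochromatic triangles.

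The missing idea is \emph{colour reuse across levels}. Build the colouring recursively. Inside a level-$j$ block with palette $P$, transport the whole level-$(j-1)$ colouring by a bijection $[(j-1)t]\to[jt]\setminus P$. Cross edges between blocks $P\neq Q$ get colours in $P\triangle Q$. This gives exactly $Ht$ primary colours. But now a colour $c$ is used both on cross edges and inside every block on which it is active, so its class is no longer bipartite, and a real argument is needed.

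That argument needs three objects you have not defined:
\begin{itemize}
\item Labels $L_{j,c}(u)\in[j+1]$, equal to $j+1$ if $c$ lies in the palette of $u$'s block, and inherited through the bijection otherwise.
\item Inputs to $f,g$ that are exactly the label vectors $(L_{j,a_d}(u))_{d\le s}$ and $(L_{j,b_d}(v))_{d\le s}$, for fixed colours $a_d\in Q\setminus P$ and $b_d\in P\setminus Q$.
\item A secondary colour given by your $\phi$ applied to the pair of $L_{j,c}$-labels of the endpoints.
\end{itemize}
Labels are automatically proper on each primary colour class, so closed walks mapping to closed walks needs nothing from the cover lemma. What the cover lemma must give is this: for fixed $v$ in a block where $c$ is missing, all $c$-neighbours of $v$ in any one block where $c$ is active carry the same $L_{j,c}$-label.

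The length drop then goes as follows. A monochromatic odd cycle of length $\ell\le 2r+3$ maps to a $\phi$-monochromatic closed odd walk of length $\ell$. Hence $\ell=2r+3$ and all labels on the cycle are distinct. If the label $j+1$ does not occur, the cycle lies inside one block, and induction on $j$ applies. Otherwise exactly one vertex $v_1$ carries it, and $v_2,\dots,v_{2r+3}$ lie in a single block on which $c$ is active. The property above then forces $L_{j,c}(v_2)=L_{j,c}(v_{2r+3})$, a contradiction.

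Your suggested mechanism, consecutive vertices with equal labels or edges collapsing to loops, cannot occur. The shortening comes from a repeated label at distance two. The induction on the stage $j$ is indispensable precisely because colours are reused.
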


Before starting the formal proof, let us give a high-level and informal description of the construction from OpenAI~\cite{openai} for the triangle case and where the modification lies. 

The construction of OpenAI builds an edge-coloring without monochromatic triangles of larger and larger complete graphs recursively through several stages, which are indexed by some counter $j=0,1,\ldots,H$. The coloring of the complete graph obtained at the end of the process, at stage $H$, is the final output. At each stage, the construction invokes the pair of maps $f,g$ with the coordinate-covering property stated in Lemma~\ref{lem:cover}, as well as large families of well-separated palettes as provided by Lemma~\ref{lem:palettes} to define the next coloring based on the previous one. Our proof follows the same general outline.

In the OpenAI construction as well as in our construction, at each stage $j$ of the process, the vertices of the current complete graph are partitioned into vertex-disjoint blocks of equal size, each of which is associated with its individual palette of colors $P\subseteq[jt]$ from the palette-family given by Lemma~\ref{lem:palettes}. The coloring of the complete graph will then be constructed in such a way that edge-colors inside a block avoid the colors in the associated palette $P$ entirely (we say they are \emph{missing}), while the remaining colors may be used (we say they are \emph{active} on the block). Concretely, inside each block one places a copy of the triangle-free edge-coloring obtained from the preceding stage $j-1$, with colors permuted suitably such that they are all active on the respective block.

Throughout the OpenAI construction (and our construction), one crucially maintains, for each appearing color $c$ in the current edge-coloring, a proper \emph{vertex-coloring} (with few colors) of the spanning subgraph of color $c$-edges. These vertex-colorings are crucial to define the edge-colors of edges \emph{between} different blocks. Namely, the color of an edge between two blocks associated with palettes $P$ and $Q$ is defined in a certain way based on the maps $f$ and $g$ applied to the vertex-colors of the endpoints of that edge. This is done in such a way that different edges $uv$ and $u'v$ between two distinct blocks $A\ni u,u'$ and $B\ni v$ have the following crucial property: If both $uv$ and $u'v$ have the same color $c$ that is active on $A$, then $u$ and $u'$ must have the same \emph{vertex-color} with respect to the proper vertex-coloring of the color $c$-edges. OpenAI's proof uses this directly to rule out monochromatic triangles between two blocks. In our proof here, we instead use the same property to rule out monochromatic odd cycles of length $2r+3$ by reducing them, essentially, to monochromatic odd cycles of length $2r+1$ in an inductively assumed coloring of a smaller complete graph. To make this work, we need to refine OpenAI's recursive edge-coloring as described before with a second coordinate based on an inductively assumed edge-coloring without monochromatic odd cycles of length at most $2r+1$. The latter essentially ensures that monochromatic closed odd walks of length $2r+3$ in our newly constructed edge-colorings cannot repeat vertex-colors, which is used in deriving the final contradiction.

We now move on to the formal proof of Lemma~\ref{lem:recursive-amplification}, from which Theorem~\ref{thm:ramsey} can then be deduced by some standard asymptotic estimates.

\begin{proof}[Proof of Lemma~\ref{lem:recursive-amplification}]
Throughout this proof, we fix the notation
\[
 m=\ceil{2H\log H},\qquad s=m(m+1)+1, \qquad  t=s\ceil{\log H},\qquad M_j=jt\quad(0\le j\le H),
\]
which is consistent with the statement of this lemma as well as with those of Lemmas~\ref{lem:cover} and~\ref{lem:palettes}.

Put $q=a_r(H+1)$, and fix an edge-coloring
\[
 \tau:E(K_{H+1})\longrightarrow[q]
\]
with no monochromatic member of $\mathcal{O}_r$. Here, we identify the vertex-set of the complete graph $K_{H+1}$ with $[H+1]$. 

Next we choose maps $f,g: [H]^s\longrightarrow[H]^s$ as in Lemma~\ref{lem:cover} and, for every stage index $j\in[H]$, fix a palette family
\[
 \mathcal{P}_j\subseteq\binom{[M_j]}{t}
\]
as given by Lemma~\ref{lem:palettes}. We write $B_j=|\mathcal{P}_j|$ and set
\[
 n_0=1,\qquad n_j=\prod_{h=1}^jB_h\quad(1\le j\le H).
\]
For each $0\le j\le H$, let us fix a set $V_j$ of size $n_j$ which will serve as the vertex-set of the edge-colored complete graph we construct at stage $j$ of the process. We also fix, for every $1\le j\le H$, a partition $V_j=\bigsqcup_{P\in \mathcal{P}_j}V_P$, where each of the sets $V_P$ has size exactly $n_{j-1}$ and is identified with the vertex-set of the complete graph $K_{V_{j-1}}$. We call $V_P$ the \emph{block} associated with the palette $P\in \mathcal{P}_j$.

For a palette $P\in\mathcal{P}_j$ and some $c\in[M_j]$, let us call $c$ \emph{active} on $V_P$ if $c\notin P$, and \emph{missing} on $V_P$ if $c\in P$.

In the following, for every $0\le j\le H$, we will construct an edge-coloring
\[
 \kappa_j:E(K_{V_j})\longrightarrow[M_j]\times[q],
\]
and, for every $c\in[M_j]$, a map $L_{j,c}:V_j\to[j+1]$ (this corresponds to one of the ``vertex-colorings'' mentioned in the informal proof overview). These objects will satisfy the following invariants.
\paragraph*{Invariants.}
\begin{enumerate}[label=\textup{(\roman*)}]
 \item\label{item:compatibility} For every $0\le j\le H$ and distinct $u,v\in V_j$ with $\kappa_j(uv)=(c,b)$, we have
 \[
  L_{j,c}(u)\ne L_{j,c}(v)
  \qquad\text{and}\qquad
  b=\tau\bigl(L_{j,c}(u)L_{j,c}(v)\bigr).
 \]

 Hence, the second coordinate of the colorings $\kappa_j$ corresponds to the edge-coloring $\tau$ of the smaller complete graph $K_{H+1}$ lifted to $V_j$ via the vertex-color maps $L_{j,c}$.

 \item\label{item:block-structure} For every $1\le j \le H$ and for every $P\in\mathcal{P}_j$ there is a bijection
 \[
  \phi_P:[M_{j-1}]\longrightarrow[M_j]\setminus P
 \]
 such that, for every edge $uu'$ inside $V_P$, we have
 \[
  \kappa_{j-1}(uu')=(d,b)
  \quad\Longleftrightarrow\quad
  \kappa_j(uu')=(\phi_P(d),b).
 \]
  In other words, the first coordinate of the coloring $\kappa_j$ restricted to the internal edges of any block $V_P$ corresponds, after suitable color-permutation, to the first coordinate of the coloring 
  $\kappa_{j-1}$, while the second coordinate is inherited from $\kappa_{j-1}$ directly. Only the colors outside $P$ are used by the first coordinate of $\kappa_j$ on edges inside $V_P$.
  
 Furthermore, for every $u\in V_P$, we have
 \[
  L_{j,\phi_P(d)}(u)=L_{j-1,d}(u)\quad(d\in[M_{j-1}]),
  \qquad
  L_{j,c}(u)=j+1\quad(c\in P).
 \]
 In other words, for every block $V_P$ and every color $c$, if $c$ is active on $V_P$ then the vertex-coloring $L_{j,c}$ on the block $V_P$ is inherited from the previous stage of the process. If $c$ is missing on $V_P$ then $L_{j,c}$ is constantly equal to $j+1$ on the block $V_P$.

 \item\label{item:cross-structure} For every $1\le j\le H$ and any two distinct palettes $P,Q\in\mathcal{P}_j$ the following holds. If $uv$ is an edge with $u\in V_P$ and $v\in V_Q$, then the first coordinate of $\kappa_j(uv)$ belongs to $P\mathbin{\triangle}Q$. Moreover, if $c\in [M_j]$ is active on $V_P$ and missing on $V_Q$, then, for every fixed $v\in V_Q$, all vertices $u\in V_P$ satisfying
 \[
  \kappa_j(uv)\in\{c\}\times[q]
 \]
 have the same $L_{j,c}$-label.

 \item\label{item:odd-girth} The coloring $\kappa_j$ contains no monochromatic member of $\mathcal{O}_{r+1}$.
\end{enumerate}

In the following, for an edge $uv$ that is colored by $\kappa_j$, we will refer to the element of $[M_j]$ which is the first coordinate of $\kappa_j(uv)$ as the \emph{primary color} of $uv$ and to the element of $[q]$ which is the second coordinate of $\kappa_j(uv)$ as its \emph{secondary color}.

Having set up the notation and stated the desired invariants, we now proceed to describing the construction of the colorings $\kappa_j$ and $L_{j,c}$. Afterwards, we prove by induction on $j$ that they indeed satisfy all the aforementioned invariants (i)--(iv). 

\paragraph*{\textbf{Construction.}} $V_0$ by definition consists of a single vertex, so we define $\kappa_0$ as the empty map (there are no edges to color). Since $M_0=0$ by definition, there are no colors at this stage of the process and so the family of vertex-color maps is empty.

Suppose next that $1\le j\le H$ and that $\kappa_{j-1}:E(K_{V_{j-1}})\rightarrow [M_{j-1}]\times [q]$, and all maps $L_{j-1,d}:V_{j-1}\rightarrow [j]$ for all $d\in [M_{j-1}]$ have already been defined. For every $P\in\mathcal{P}_j$, since $|[M_j]\setminus P|=(j-1)t=M_{j-1}$, there exists a bijection from $[M_{j-1}]$ to $[M_j]\setminus P$. Pick and fix such a bijection and call it $
 \phi_P:[M_{j-1}]\longrightarrow[M_j]\setminus P.
$

We now define the vertex-color maps $L_{j,c}$ for $c\in [M_j]$ as follows. Let any $u\in V_j$ be given and let $V_P$ with $P\in \mathcal{P}_j$ be the unique block containing $u$. We then define
\[
 L_{j,c}(u):=
 \begin{cases}
  L_{j-1,\phi_P^{-1}(c)}(u),&c\notin P,\\
  j+1,&c\in P.
 \end{cases}
\]
Next we define $\kappa_j$. For this, consider any edge $uu'$ of $K_{V_j}$. Suppose first that it is internal to some block, and let $P\in \mathcal{P}_j$ be unique such that $u,u'\in V_P$. 
We now define
\[
 \kappa_j(uu'):=(\phi_P(d),b)
\]
where $d$ and $b$ are unique such that $\kappa_{j-1}(uu')=(d,b)$.

It remains to define $\kappa_j$ on edges between distinct blocks. 

As some preparation, we first fix an arbitrary linear ordering $<$ on $\mathcal{P}_j$, and, for every ordered pair $P<Q$ (arbitrarily) choose and fix distinct colors
\[
 a_1^{PQ},\ldots,a_s^{PQ}\in Q\setminus P,
 \qquad
 b_1^{PQ},\ldots,b_s^{PQ}\in P\setminus Q,
\]
which is possible by choice of the collection $\mathcal{P}_j$ (cf.~Lemma~\ref{lem:palettes}). 

With these choices in place, we can now complete the definition of $\kappa_j$: Let $uu'\in E(K_{V_j})$ be any edge which is \emph{not} internal to some block, and let $P, Q\in \mathcal{P}_j$ be the unique distinct palettes such that $u\in V_P$ and $u'\in V_Q$. We assume $P<Q$ without loss of generality (otherwise the definition proceeds the same way after renaming $u$ and $u'$). Now consider the tuples of vertex-colors
\[
 x(u)=\bigl(L_{j,a_d^{PQ}}(u)\bigr)_{d=1}^s,
 \qquad
 y(u')=\bigl(L_{j,b_d^{PQ}}(u')\bigr)_{d=1}^s.
\]
The colors $a_d^{PQ}$ are active on $V_P$ and the colors $b_d^{PQ}$ are active on $V_Q$, so by our previous definition of the vertex-color maps $L_{j,c}(\cdot)$ we have $x(u),y(u')\in[j]^s\subseteq[H]^s$. If
\[
 x_d(u)=(f(y(u')))_d
\]
for some $d$, let $d_0$ be the least such index and put $c=a_{d_0}^{PQ}$. Otherwise, \cref{lem:cover} gives an index $d$ for which
\[
 y_d(u')=(g(x(u)))_d;
\]
let $d_0$ be the least such index and put $c=b_{d_0}^{PQ}$. Finally, we define
\[
 \kappa_j(uu')=
 \bigl(c,\tau(L_{j,c}(u)L_{j,c}(u'))\bigr).
\]
In either case, $c$ is active on exactly one of $V_P, V_Q$ and missing on the other. Hence, by definition of $L_{j,c}(\cdot)$, one of $L_{j,c}(u),L_{j,c}(u')$ lies in $[j]$ and the other equals $j+1$, so the argument of $\tau$ is indeed always an edge of $K_{H+1}$. This completes the description of the recursive construction of the edge- and vertex-color maps. We now move on to the proof of the invariants.

\paragraph*{Verifying the invariants.}
We prove by induction on the counter $j=0,1,\ldots,H$ that the maps $\kappa_j$ and $L_{j,c}$ defined in the previous paragraph indeed satisfy the invariants (i)--(iv) claimed before. The assertions are immediately checked for $j=0$. 

Now consider any $1\le j\le H$, and suppose we already proved the invariants for all the counters $0,1,\ldots,j-1$. All the statements in invariant (ii) follow directly from the construction of the maps, and so this invariant indeed holds for counter $j$.

Next, we verify (i) for counter $j$. To do so, consider any edge $uv$ of $K_{V_j}$ and let $(c,b)\in [M_j]\times [q]$ be the color assigned to $uv$ by $\kappa_j$. 

Suppose first that $uv$ is internal to some block $V_P$. By invariant~(ii), we have that $\kappa_{j-1}(uv)=(d,b)$ where $d$ is unique such that $\phi_P(d)=c$ and $\phi_P:[M_{j-1}]\rightarrow [M_j]\setminus P$ is the fixed bijection from invariant (ii). By invariant (i) for counter $j-1$ which we assume inductively, we now have
\[
 b=\tau\bigl(L_{j-1,d}(u)L_{j-1,d}(v)\bigr)
 \quad\text{and}\quad
 L_{j-1,d}(u)\ne L_{j-1,d}(v).
\]
Note that $c$ is active on $V_P$. Hence, by our definition of $L_{j,c}$ and since $d=\phi_P^{-1}(c)$, we find that $L_{j,c}(u)=L_{j-1,d}(u)$ and $L_{j,c}(v)=L_{j-1,d}(v)$. Combining this with the equation line above implies that invariant (i) indeed holds in this case. 

Next, suppose that $u$ and $v$ lie in different blocks $V_P, V_Q$. Our definition of $\kappa_j(uv)$ for this case then directly implies that $b=\tau(L_{j,c}(u)L_{j,c}(v))$, as desired, and that $c$ is active in one of $V_P, V_Q$ and missing in the other. The latter, together with the definition of $L_{j,c}(\cdot)$, implies that one of $L_{j,c}(u), L_{j,c}(v)$ must be in $[j]$ while the other equals $j+1$. This implies that $L_{j,c}(u)\neq L_{j,c}(v)$. Altogether, this verifies invariant (i) also in the case that $u,v$ belong to different blocks. 

We now move on to invariant (iii). So consider any two distinct palettes $P,Q \in \mathcal{P}_j$ and consider any edge $uv$ with $u\in V_P$ and $v\in V_Q$. It follows from our definition of $\kappa_j(uv)$, that the primary color of the edge $uv$ under $\kappa_j$ always lies in the union of $s$ colors chosen from $Q\setminus P$ and $s$ colors chosen from $P\setminus Q$. Hence, the primary color of $uv$ always lies in the symmetric difference $P\Delta Q$. This verifies the first part of invariant (iii).

Now consider any $c\in [M_j]$ which is active on $V_P$ but missing on $V_Q$ (so $c\in Q\setminus P$), and fix any vertex $v\in V_Q$. We have to show that all $u\in V_P$ for which the primary color of $uv$ under $\kappa_j$ equals $c$ are assigned the same color by $L_{j,c}(\cdot)$. 

To prove this, consider first the case that $P<Q$. If any edge from $V_P$ to $v$ has the first coordinate of $\kappa_j$ equal to $c$, then we must have $c\in \{a_1^{PQ},\ldots,a_s^{PQ}\}$. In fact, there will be a unique $d_0\in [s]$ such that $c=a_{d_0}^{PQ}$ and
$$L_{j,a_{d_0}^{PQ}}(u)=f((L_{j,b_d^{PQ}}(v))_{d=1}^s)_{d_0}.$$ In particular, $u$ must satisfy $L_{j,c}(u)=f((L_{j,b_d^{PQ}}(v))_{d=1}^s)_{d_0}$. Since the right hand side depends only on $v$ and $d_0$ (and since $d_0$ is uniquely determined by $c$), we can see that the color $L_{j,c}(u)$ is uniquely determined, independent of our choice of $u$. 

Similarly, if $Q<P$ and at least one edge from $V_P$ to $v$ has primary color under $\kappa_j$ equal to $c$, then there is a unique index $d_0\in [s]$ such that $c=b_{d_0}^{QP}$ which satisfies 
$$L_{j,c}(u)=g((L_{j,a_d^{QP}}(v))_{d=1}^s)_{d_0}.$$

Again, this shows that $L_{j,c}(u)$ is uniquely determined by $v$ and $c$, independent of our choice of $u$. 

Thus, in both cases we obtain the desired statement claimed in the invariant~(iii), concluding its proof.

So far we have proved invariants (i)--(iii) at stage $j$. It remains to verify invariant (iv), which we do next. Suppose towards a contradiction that a member of $\mathcal{O}_{r+1}$, i.e., an odd cycle of length at most $2r+3$, was a monochromatic subgraph of the edge-colored complete graph $(K_{V_j},\kappa_j)$. Fix such a cycle $C$ and denote its cyclic vertex-sequence as
\[
 v_1v_2\cdots v_\ell v_1,
\]
where $\ell\in\{3,5,\ldots,2r+3\}$ is the length of $C$. Let $(c,b)\in [M_j]\times [q]$ be the color such that every edge of $C$ is assigned color $(c,b)$ by $\kappa_j$. 

Put
\[
 z_i=L_{j,c}(v_i)\qquad(1\le i\le \ell).
\]
By invariant~(i) for counter $j$, cyclically consecutive colors $z_i$ are distinct. Hence, the cyclic sequence of vertex-colors
\[
 z_1z_2\cdots z_\ell z_1
\]
can be seen as a closed odd walk in the complete graph $K_{[j+1]}\subseteq K_{[H+1]}$ all of whose edges have color $b$ under $\tau$. By our initial assumption on the edge-coloring $\tau$ of $K_{[H+1]}$, it contains no monochromatic member of $\mathcal{O}_r$. In particular, there is no monochromatic closed odd walk of length at most $2r+1$ in $(K_{[H+1]},\tau)$, since every closed odd walk yields an odd cycle of length at most the length of the walk. This implies that $\ell=2r+3$ and that $z_1,\ldots,z_{2r+3}$ must be pairwise distinct, as otherwise $z_1z_2\ldots z_{\ell}$ or a suitable cyclic subsequence of it would form a monochromatic closed odd walk of length at most $2r+1$ in $(K_{[H+1]},\tau)$, a contradiction.

Moving on, we distinguish two cases.

First, suppose that $j+1\notin \{z_1,\ldots,z_{2r+3}\}$. By definition of the vertex-coloring $L_{j,c}(\cdot)$, this implies that every vertex of $C$ lies in a block on which $c$ is active. By invariant~(iii) for counter $j$, no edge whose primary color under $\kappa_j$ equals $c$ can join two distinct such blocks. Hence we find that the cycle $C$ must lie entirely within one block $V_P$ on which $c$ is active. But then invariant~(ii) for counter $j$ shows that the cycle $C$ corresponds to a monochromatic odd cycle of length $2r+3$ under the coloring $\kappa_{j-1}$ of the complete graph $K_{V_{j-1}}$ identified with $K_{V_P}$, contradicting our inductive assumption that invariant (iv) holds for counter $j-1$.

Hence we must have $j+1\in \{z_1,\ldots,z_{2r+3}\}$. Since we have shown that $z_1,\ldots,z_{2r+3}$ are pairwise distinct, $j+1$ equals exactly one of the $z_i$. Without loss of generality (possibly after relabeling), we may assume that $z_1=j+1$. Again by our definition of $L_{j,c}(\cdot)$, we then must have that $v_1$ lies in a block $V_Q$ on which $c$ is missing, while all the other vertices $v_2,\ldots,v_{2r+3}$ must lie in blocks on which $c$ is active. Again by invariant~(iii) for counter $j$, the path $v_2v_3\cdots v_{2r+3}$ must then lie in a single block $V_P\neq V_Q$ on which $c$ is active. Both of the edges $v_1v_2$ and $v_1v_{2r+3}$ are assigned primary color $c$ by $\kappa_j$. Hence, invariant~(iii) for counter~$j$ implies that 
\[
z_2= L_{j,c}(v_2)=L_{j,c}(v_{2r+3})=z_{2r+3},
\]
contrary to the fact that we earlier proved that $z_1,\ldots,z_{2r+3}$ are pairwise distinct. This yields the desired contradiction and proves the invariant (iv) for counter~$j$. All in all, this concludes the inductive proof that all invariants (i)--(iv) are satisfied for $j=0,1,\ldots,H$. 

Now consider the coloring $\kappa_H:E(K_{V_H})\rightarrow [M_H]\times [q]$ of the complete graph $K_{V_H}$ on $n_H$ vertices at stage $H$ of the process. It uses a total of at most $M_Hq$ colors and by invariant~(iv) there is no monochromatic member of $\mathcal{O}_{r+1}$ in any color. By Lemma~\ref{lem:palettes}, we furthermore have
\[
 n_H=\prod_{j=1}^HB_j \ge \prod_{j=1}^{H}\frac{j^t}{s\bigl(e^2j\ceil{\log H}^{\,2}\bigr)^s}
 =
 \frac{(H!)^{t-s}}
 {s^H\bigl(e^2\ceil{\log H}^{\,2}\bigr)^{sH}}
\]
 and hence $n_H\ge N_H$. 

 Altogether this shows that $a_{r+1}(N_H)\le a_{r+1}(n_H)\le M_Hq=Htq=Hta_r(H+1)$, which is the statement of the lemma that we set out to prove. This concludes the proof.
\end{proof}

\begin{proof}[Proof of Theorem~\ref{thm:ramsey}]
In the following, let $H$ be a sufficiently large integer, and let $m$, $s$, $t$ and $N_H$ be defined as in the statement of Lemma~\ref{lem:recursive-amplification} and let $M_H:=Ht$. From the definitions and Stirling's formula one easily derives the following asymptotic estimates:
\begin{align}
 s&=(4+o(1))H^2(\log H)^2,\notag\\
 M_H&=(4+o(1))H^3(\log H)^3,\label{eq:parameter-asymptotics}\\
 \log N_H&=(1+o(1))M_H\log H=(4+o(1))H^3(\log H)^4,\label{eq:size-asymptotics}\\
 \log^{(2)}N_H&=(3+o(1))\log H.\notag
\end{align}
Consequently,
\begin{equation}\label{eq:color-asymptotics}
 M_H=(3+o(1))\frac{\log N_H}{\log^{(2)}N_H},
 \qquad
 \frac{\log N_H}{\log N_{H-1}}\longrightarrow1,
\end{equation}
and, for every fixed $i\ge2$,
\begin{equation}\label{eq:log-transfer}
 \log^{(i+1)}N_H=(1+o(1))\log^{(i)}H.
\end{equation}

We now use these estimates as well as Lemma~\ref{lem:recursive-amplification} to prove by induction on $r\ge 1$ that
\begin{equation}\label{eq:color-complexity}
 a_r(n)\le(3+o(1))\frac{\log n}{\log^{(r+1)}n}
 \qquad(n\to\infty).
\end{equation}
Throughout the following, we will use the following reformulation of the inequality proved in Lemma~\ref{lem:recursive-amplification}: $$a_{r+1}(N_H)\le M_Ha_r(H+1)$$ for every fixed $r$ and $H$ sufficiently large. 

We first prove \eqref{eq:color-complexity} in the base case $r=1$. Noting that $a_0(n)=1$, Lemma~\ref{lem:recursive-amplification} implies that $a_1(N_H)\le M_H$ for $H$ sufficiently large. Now consider any sufficiently large integer $n$, and let $H$ be the smallest integer satisfying $n\le N_H$. Clearly, $H\rightarrow \infty$ as $n\rightarrow \infty$ and $N_{H-1}<n\le N_H$. Equation~\eqref{eq:color-asymptotics} now implies that $\log n=(1+o(1))\log N_H$, which in turn implies $\log\log n=(1+o(1))\log\log N_H$. Consequently, using Equation~\eqref{eq:color-asymptotics} we find that
$$a_1(n)\le a_1(N_H)\le M_H=(3+o(1))\frac{\log N_H}{\log\log N_H}=(3+o(1))\frac{\log n}{\log \log n}.$$ This proves~\eqref{eq:color-complexity} for $r=1$, settling the base of the induction.

For the induction step, suppose that $r\ge 1$ is fixed and we already proved that \eqref{eq:color-complexity} holds for $r$. For every sufficiently large integer $H$, by Lemma~\ref{lem:recursive-amplification} we then find:
\[
 a_{r+1}(N_H)
 \le M_Ha_r(H+1)
 \le(3+o(1))\frac{M_H\log H}{\log^{(r+1)}H}
 =(3+o(1))\frac{\log N_H}{\log^{(r+2)}N_H},
\]
where the final equality follows from \eqref{eq:size-asymptotics} and \eqref{eq:log-transfer}. Now, for any sufficiently large integer $n$, let $H$ be the smallest integer such that $N_H\ge n$. So $N_{H-1}<n\le N_H$. Since $\frac{\log N_H}{\log N_{H-1}}\rightarrow 1$ for $H\rightarrow \infty$ and since $n\le N_H$ implies that $H\rightarrow \infty$ as $n\rightarrow \infty$, we find that $\log n=(1+o(1))\log N_H$, implying in particular that 
$$a_{r+1}(n)\le a_{r+1}(N_H)\le (3+o(1))\frac{\log N_H}{\log^{(r+2)}N_H}=(3+o(1))\frac{\log n}{\log^{(r+2)}n}, $$ as desired. This concludes the inductive proof of~\eqref{eq:color-complexity}. 

Finally, fix the integer $p$ from the theorem and a constant $\delta\in(0,1/3)$, and let
\[
 n=\floor{(\log^{(p-1)}k)^{(1/3-\delta)k}}.
\]
As $k\to\infty$,
\[
 \log n=(1/3-\delta+o(1))k\log^{(p)}k,
 \qquad
 \log^{(p+1)}n=(1+o(1))\log^{(p)}k.
\]
Therefore \eqref{eq:color-complexity} gives
\[
 a_p(n)\le(1-3\delta+o(1))k<k
\]
for all sufficiently large $k$. Hence $K_n$ has a $k$-edge-coloring with no monochromatic member of $\mathcal{O}_p$, so $R_k(\mathcal{O}_p)>n$. Since $\delta>0$ was arbitrary, we have shown that
\[
 R_k(\mathcal{O}_p)\ge(\log^{(p-1)}k)^{k/3-o(k)},
\]
as desired. This concludes the proof of the theorem.
\end{proof}

\section{$\chi$-amenable graph parameters}\label{sec:amenable}
This section is devoted to proving Theorem~\ref{thm:amenable}. We split the proof into two subsections based on the graph parameters treated, where we bundle the chromatic number, the fractional chromatic number and the Hall ratio together, since the proofs for their $\chi$-amenability are related. The proof for $\chi_f$ uses linear programming duality, while the proof for the $\chi$-amenability of the strict vector chromatic number $\overline{\vartheta}$ relies on linear algebra as well as a deep theorem of Marcus, Spielman and Srivastava~\cite{MR3374963} from their resolution of the famous Kadison-Singer problem, and is presented in a separate section.

\subsection{Chromatic number, fractional chromatic number and Hall ratio}

In this section, we prove that each parameter $f\in \{\chi,\chi_f,\rho\}$ is $\chi$-amenable. We start by verifying the submultiplicativity condition $f(G_1\cup G_2)\le f(G_1)f(G_2)$ for these parameters. For the chromatic number, this is a well-known fact and can be verified by considering a product-coloring of optimal colorings of $G_1$ and $G_2$. For the fractional chromatic number and Hall ratio, this inequality is also easy to verify, but we include a proof for completeness. In the following we will use the following definitions which can be easily checked to be consistent with those in the literature (cf.~\cite{MR4452953, MR4208096}): Given a graph $G$, its Hall-ratio is $\rho(G):=\max_{\emptyset\neq H\subseteq G}\frac{|V(H)|}{\alpha(H)}$, where $\alpha$ denotes the independence number and the maximum is taken over all nonempty subgraphs $H$ of $G$. The fractional chromatic number $\chi_f(G)$ of a graph $G$ is defined as the optimal value of the following linear program. Here, $\mathcal{I}(G)$ denotes the collection of independent vertex subsets in $G$.

\begin{align*}
\tag{P}
    \text{min} \sum_{I \in \mathcal{I}(G)}&{x_I} \\
    \text{s.t.} \sum_{I\in \mathcal{I}(G): v \in I}&{x_I}\ge 1~~(\forall v \in V(G)), \\
    & x_I \ge 0~~(\forall I \in \mathcal{I}(G)).
\end{align*}

By linear programming duality, it is also equal to the optimal value of the following dual program. 

\begin{align*}
\tag{D}
    \text{max} &\sum_{v\in V(G)}{y_v} \\
    \text{s.t.}~~~~&\sum_{v \in I}{y_v}\le 1~~(\forall I \in \mathcal{I}(G)), \\
    &~~~~~y_v \ge 0~~(\forall v\in V(G)).
\end{align*}
Based on these definitions, we now observe that $\rho$ and $\chi_f$ are indeed submultiplicative.
\begin{lem}\label{lem:productver1}
For all graphs $G_1$ and $G_2$ on the same vertex-set, we have $\rho(G_1\cup G_2)\le \rho(G_1)\rho(G_2)$ and $\chi_f(G_1\cup G_2)\le \chi_f(G_1)\chi_f(G_2)$.
\end{lem}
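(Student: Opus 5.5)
For the Hall ratio, I will fix a nonempty subgraph $H$ of $G_1\cup G_2$; it suffices to take $H$ induced, say on vertex set $S$. The goal is $|S|/\alpha((G_1\cup G_2)[S])\le \rho(G_1)\rho(G_2)$. The key observation is that an independent set of $G_2[I]$, where $I$ is an independent set of $G_1[S]$, is independent in $(G_1\cup G_2)[S]$. So I first choose a maximum independent set $I$ of $G_1[S]$. By the definition of $\rho(G_1)$, applied to the subgraph $G_1[S]$, we have $|I|=\alpha(G_1[S])\ge |S|/\rho(G_1)$. Next I choose a maximum independent set $J$ of $G_2[I]$. Then $|J|\ge |I|/\rho(G_2)\ge |S|/(\rho(G_1)\rho(G_2))$. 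Since $J\subseteq I$, the set $J$ is independent in both $G_1$ and $G_2$, hence in the union. This gives $\alpha((G_1\cup G_2)[S])\ge |S|/(\rho(G_1)\rho(G_2))$. Taking the maximum over $S$ yields the claim. (Non-induced subgraphs only have larger independence number, so restricting to induced ones is harmless.)

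For the fractional chromatic number, I will use the primal program (P) and build a product solution. Let $(x_I)$ be an optimal solution of (P) for $G_1$ and $(x'_J)$ an optimal solution for $G_2$. For every pair $(I,J)$, the set $I\cap J$ is independent in $G_1\cup G_2$. I define $z_K:=\sum_{(I,J):\,I\cap J=K} x_I x'_J$ for each $K\in\mathcal{I}(G_1\cup G_2)$.

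Feasibility follows by factoring the covering sum. For each vertex $v$, $\sum_{K\ni v} z_K=\sum_{I\ni v}x_I\cdot\sum_{J\ni v}x'_J\ge 1\cdot 1$. The objective is $\sum_K z_K=\sum_I x_I\sum_J x'_J=\chi_f(G_1)\chi_f(G_2)$. Since (P) minimizes, $\chi_f(G_1\cup G_2)$ is at most this value.

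Neither step poses a real obstacle. The only care needed is the bookkeeping in the double sum: several pairs $(I,J)$ can map to the same $K$, which is exactly why $z_K$ is defined by summing over all such pairs. Also, both programs range over all independent sets, so the product solution lies in the correct index set.
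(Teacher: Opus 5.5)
Your proposal is correct and follows essentially the same route as the paper. For $\rho$ you take an independent set of size at least $|S|/\rho(G_1)$ and then an independent subset of it in $G_2$; the paper does this on an arbitrary subgraph split as $H_1\cup H_2$ instead of first reducing to induced subgraphs, which is only a cosmetic difference. For $\chi_f$, your product solution $z_K=\sum_{I\cap J=K}x_Ix'_J$ is exactly the paper's construction.
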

\begin{proof}
Let $H$ be any nonempty subgraph of $G_1\cup G_2$. Let $H_1$ and $H_2$ be two spanning subgraphs of $H$ such that $H=H_1\cup H_2$ and $H_i\subseteq G_i$ for $i=1,2$. By definition of $\rho(G_1)$, there exists an independent set $I$ in $H_1$ such that $|I|\ge \frac{|V(H_1)|}{\rho(G_1)}$. Now consider the nonempty subgraph $H_2[I]$ of $G_2$. By definition of $\rho(G_2)$, there exists an independent set $J\subseteq I$ in $H_2$ such that $|J|\ge \frac{|V(H_2[I])|}{\rho(G_2)}=\frac{|I|}{\rho(G_2)}\ge \frac{|V(H_1)|}{\rho(G_1)\rho(G_2)}=\frac{|V(H)|}{\rho(G_1)\rho(G_2)}$. Then $J$ is independent in both $H_1$ and $H_2$, and thus independent in $H=H_1\cup H_2$. It follows that $\alpha(H)\ge \frac{|V(H)|}{\rho(G_1)\rho(G_2)}$, or, equivalently, $\frac{|V(H)|}{\alpha(H)}\le \rho(G_1)\rho(G_2)$. Since $H$ was initially chosen as an arbitrary subgraph of $G_1\cup G_2$, this shows that $\rho(G_1\cup G_2)\le \rho(G_1)\rho(G_2)$, as desired. 

Let us now move on to the fractional chromatic number. For $i=1,2$, let us denote by $(x_I^{(i)})_{I\in \mathcal{I}(G_i)}$ an optimal solution to the linear program~(P) for $G_i$. Note that the independent sets in $G_1\cup G_2$ are exactly the intersections of the members of $\mathcal{I}(G_1)$ and $\mathcal{I}(G_2)$.

Now, for each $S\in \mathcal{I}(G_1\cup G_2)$, let us define
$$x_S':=\sum_{\substack{I\in \mathcal{I}(G_1), J\in \mathcal{I}(G_2):\\
I\cap J=S}}x_I^{(1)}x_J^{(2)}.$$

One then easily checks that $$\sum_{S\in \mathcal{I}(G_1\cup G_2)}x_S'=\sum_{I\in\mathcal{I}(G_1), J\in \mathcal{I}(G_2)}x_I^{(1)}x_J^{(2)}=\sum_{I\in \mathcal{I}(G_1)}x_I^{(1)}\cdot \sum_{J\in \mathcal{I}(G_2)}x_J^{(2)}=\chi_f(G_1)\chi_f(G_2).$$
Similarly, for every $v\in V(G_1)=V(G_2)$ we have
$$\sum_{\substack{S\in \mathcal{I}(G_1\cup G_2):\\v\in S}}x_S'=\sum_{\substack{I\in \mathcal{I}(G_1), J\in \mathcal{I}(G_2):\\ v\in I\cap J}}x_I^{(1)}x_J^{(2)}=\sum_{I\in\mathcal{I}(G_1): v\in I}x_I^{(1)}\cdot \sum_{J\in\mathcal{I}(G_2): v\in J}x_J^{(2)}\ge 1\cdot 1=1.$$ Hence, $(x_S')_{S\in\mathcal{I}(G_1\cup G_2)}$ is feasible for the linear program (P) for $G_1\cup G_2$ with objective value $\chi_f(G_1)\chi_f(G_2)$. Since $\chi_f(G_1\cup G_2)$ is by definition the minimum of that program, it follows that $\chi_f(G_1\cup G_2)\le\chi_f(G_1)\chi_f(G_2)$, as desired. This concludes the proof of the lemma.
\end{proof}

Having established that the first condition of $\chi$-amenability is satisfied by every $f\in \{\chi,\chi_f,\rho\}$, it remains to verify the second condition in the definition (for some suitable choice of the absolute constant $C\ge 1$). 

For $\chi$, the condition is trivially satisfied by taking $C:=1$. For the fractional chromatic number and the Hall ratio, the statement requires a proof, and this is supplied by our next lemma. Here, $e$ denotes Euler's number.

\begin{lem}\label{lem:amenver1}
Let $C:=\frac{2}{1-e^{-1}}$. Every graph $G$ has a subgraph $G'$ such that $$\chi_f(G')\ge \chi_f(G)/C \text{ and }\chi(G')\le C\chi_f(G').$$ Similarly, every graph $G$ has a subgraph $G'$ such that $$\rho(G')\ge \rho(G)/C \text{ and }\chi(G')\le C\rho(G').$$ 
\end{lem}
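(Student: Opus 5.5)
The idea is the same for both parameters. Take $k:=\lceil f(G)\rceil$ independent sets in $G$ that together cover a $(1-e^{-1})$-fraction of a suitable ``weight''. Let $G'$ be the subgraph of $G$ induced by their union $W$. Then $\chi(G')\le k$, while $f(G')$ stays at least $(1-e^{-1})f(G)$. Since $f(G)\ge 1$ whenever $G$ is nonempty (the empty graph is trivial), we get $k\le 2f(G)$. Hence $\chi(G')\le 2f(G)\le \frac{2}{1-e^{-1}}f(G')=Cf(G')$, and also $f(G')\ge (1-e^{-1})f(G)\ge f(G)/C$.

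\emph{Fractional chromatic number.} Let $\chi_f(G)=:\chi$ and let $k:=\lceil\chi\rceil$.
\begin{itemize}
\item Fix an optimal solution $(x_I)$ of (P) and an optimal solution $(y_v)$ of (D), both with value $\chi$.
\item Sample $I_1,\ldots,I_k$ independently, each $I$ chosen with probability $x_I/\chi$.
\item Each vertex $v$ lies in a fixed $I_i$ with probability at least $1/\chi$. So $v$ is missed by all $k$ sets with probability at most $(1-1/\chi)^k\le e^{-k/\chi}\le e^{-1}$.
\item Let $W=I_1\cup\cdots\cup I_k$. Then $\mathbb{E}\bigl[\sum_{v\in W}y_v\bigr]\ge (1-e^{-1})\sum_v y_v=(1-e^{-1})\chi$, so we can fix an outcome achieving at least this.
\item Set $G':=G[W]$. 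It is properly colored by the $k$ sets $I_i$, so $\chi(G')\le k\le 2\chi$.
\item The restriction $(y_v)_{v\in W}$ is feasible for (D) of $G'$, because independent sets of $G'$ are independent in $G$. Hence $\chi_f(G')\ge\sum_{v\in W}y_v\ge(1-e^{-1})\chi$.
\end{itemize}

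\emph{Hall ratio.} Let $r:=\rho(G)$ and choose a subgraph $H$ attaining it, so $|V(H)|=n$ and $\alpha(H)=n/r$. We may take $H$ induced, since adding edges only decreases $\alpha$.
\begin{itemize}
\item Greedily, for $k:=\lceil r\rceil$ rounds, remove a maximum independent set from the graph induced on the remaining vertices of $H$.
\item If $U$ is the remaining vertex set, then $\alpha(H[U])\ge |U|/\rho(G)=|U|/r$, because $H[U]$ is a nonempty subgraph of $G$. So each round shrinks the remainder by a factor of at most $1-1/r$.
\item After $k$ rounds at most $n(1-1/r)^k\le n/e$ vertices remain. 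The set $W$ of removed vertices therefore has $|W|\ge(1-e^{-1})n$.
\item Set $G':=H[W]$. It is partitioned into $k$ independent sets, so $\chi(G')\le k\le 2r$.
\item Also $\alpha(G')\le\alpha(H)=n/r$, hence $\rho(G')\ge |W|/\alpha(G')\ge(1-e^{-1})r$.
\end{itemize}

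The only delicate step is the fractional case. There one must notice that the dual weights $y$ remain feasible after restricting to an induced subgraph. This is what lets the covered $y$-mass serve as a lower bound for $\chi_f(G')$. The rest is the standard $(1-1/\chi)^{\lceil\chi\rceil}\le e^{-1}$ covering estimate.
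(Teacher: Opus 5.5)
Your proposal is correct and follows essentially the same argument as the paper. For $\chi_f$ you sample $\lceil\chi_f(G)\rceil$ independent sets from the normalized optimal primal solution and use that the optimal dual weights stay feasible on the induced subgraph; for $\rho$ you greedily remove $\lceil\rho(G)\rceil$ maximum independent sets from a ratio-attaining subgraph $H$ and use $\alpha(G')\le\alpha(H)$, with your reduction to an induced $H$ being a harmless but unnecessary extra step.
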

\begin{proof}
We start by giving the proof for the fractional chromatic number. Let $G$ be any given graph, and set $z:=\chi_f(G)\ge 1$. By our definition of the fractional chromatic number and strong linear programming duality, there exists a pair of optimal primal and dual solutions to the linear programs (P) and (D) for the graph $G$ with the same objective value. Let us denote such a pair of optimal solutions by $(x_I)_{I\in \mathcal{I}(G)}$ and $(y_v)_{v\in V(G)}$, such that $$\sum_{I\in \mathcal{I}(G)}x_I=\sum_{v\in V(G)}y_v=\chi_f(G).$$ Let $p_I:=\frac{x_I}{\chi_f(G)}\ge 0$ for every $I\in \mathcal{I}(G)$. Then we have $\sum_{I\in \mathcal{I}(G)}p_I=1$, and so we can define a probability distribution $\mathcal{D}$ on the independent sets of $G$ by including any independent set $I$ with probability exactly $p_I$. Since $\sum_{I\in \mathcal{I}(G): v\in I}p_I=\frac{\sum_{I\in\mathcal{I}(G): v\in I}x_I}{\chi_f(G)}\ge \frac{1}{\chi_f(G)}=\frac{1}{z}$, we find that a random independent set $I$ drawn according to the distribution $\mathcal{D}$ contains any given vertex with probability at least $\frac{1}{z}$. 

Now let $k:=\lceil z\rceil$ and let us consider drawing $k$ independent samples $I_1,\ldots,I_k$ from the distribution $\mathcal{D}$ of independent sets, and defining a subgraph $G'$ of $G$ as $G':=G[I_1\cup\cdots\cup I_k]$. Evidently, regardless of the outcome of the samples, we have $\chi(G')\le k$. Furthermore, by the above, any given vertex of $G$ is contained in $V(G')$ with probability at least $1-(1-1/z)^k\ge 1-\exp(-k/z)\ge 1-e^{-1}$. Hence, by linearity of expectation, the expectation of $\sum_{v\in V(G')}y_v$ is at least $\sum_{v\in V(G)}y_v\cdot (1-e^{-1})=z(1-e^{-1})$. This shows that there exists some outcome of $G'$ for which $\sum_{v\in V(G')}y_v\ge z(1-e^{-1})$ and $\chi(G')\le k$. In the remainder, we fix such an induced subgraph $G'$ of $G$ with these properties.

Now observe that since every independent set in $G'$ is also an independent set in $G$, we have that $(y_v)_{v\in V(G')}$ is a feasible solution for the dual program (D) for the subgraph $G'$. Hence, it follows that 

$$\chi_f(G')\ge \sum_{v\in V(G')}y_v\ge z(1-e^{-1})\ge \chi_f(G)/C,$$ by our definition of $C$. On the other hand, we have $\chi(G')\le k=\lceil z\rceil\le 2z= \frac{2}{1-e^{-1}}z(1-e^{-1})\le C\chi_f(G')$. This shows that $G'$ is a subgraph of $G$ satisfying both properties claimed in the lemma, concluding the proof of the part about the fractional chromatic number.

We now turn to the second part of the claim about the Hall ratio. So let $G$ be any given graph, and let $z:=\rho(G)\ge 1$. Let $H\subseteq G$ be a nonempty subgraph attaining the maximum in the definition of $\rho(G)$, i.e., such that $z=\frac{|V(H)|}{\alpha(H)}$. Set $k:=\lceil z\rceil$ and let $I_1,\ldots,I_k$ be a sequence of disjoint independent sets in $H$ defined as follows: For each $i=1,\ldots,k$, we pick $I_i$ as a maximum-size independent set in the induced subgraph $H-\bigcup_{1\le j<i}I_j$. Let $G':=H[I_1\cup\cdots\cup I_k]$. It is then clear that $\chi(G')\le k$. Also observe that since $G'$ is an induced subgraph of $H$, we have $\alpha(G')\le \alpha(H)=\frac{|V(H)|}{z}$. Next, we want to give a lower bound on the number of vertices of $G'$. For this, observe that by definition of the Hall ratio and by definition of $I_1,\ldots,I_k$, for every $i=1,\ldots,k$ we have that
$$|I_i|=\alpha\left(H-\bigcup_{1\le j<i}I_j\right)\ge \frac{|V(H-\bigcup_{1\le j<i}I_j)|}{\rho(G)}=\frac{1}{z}\left(|V(H)|-\sum_{j=1}^{i-1}|I_j|\right).$$

Rearranging yields that

$$|V(H)|-\sum_{j=1}^i|I_j|\le \left(|V(H)|-\sum_{j=1}^{i-1}|I_j|\right)-|I_i|\le (1-1/z)\left(|V(H)|-\sum_{j=1}^{i-1}|I_j|\right).$$
This immediately yields that
$$|V(H)|-\sum_{j=1}^k|I_j|\le (1-1/z)^k|V(H)|\le \exp(-k/z)|V(H)|\le e^{-1}|V(H)|,$$ so $|V(G')|=\sum_{j=1}^{k}|I_j|\ge (1-e^{-1})|V(H)|$.

All in all, from this we may conclude that $$\rho(G')\ge \frac{|V(G')|}{\alpha(G')}\ge \frac{(1-e^{-1})|V(H)|}{|V(H)|/z}=z(1-e^{-1})\ge \frac{k}{2}(1-e^{-1})=k/C.$$ In particular, we have $\rho(G')\ge \rho(G)(1-e^{-1})\ge \rho(G)/C$ and $\chi(G')\le k\le C\rho(G')$. This concludes the proof of the second part of the lemma. 
\end{proof}

The previous lemma shows that also $\chi_f$ and $\rho$ satisfy the second condition in the definition of $\chi$-amenability, with $C=\frac{2}{1-e^{-1}}$. All in all, this shows that each of $\chi,\chi_f,\rho$ is $\chi$-amenable. 

\subsection{Strict vector chromatic number}

In this section, we prove that also the strict vector chromatic number $\overline{\vartheta}$ is $\chi$-amenable, thereby completing the proof of Theorem~\ref{thm:amenable}. Before we go into the details of this, let us start by giving a convenient definition of $\overline{\vartheta}$ in terms of positive semidefinite matrices. While many different but equivalent definitions of $\overline{\vartheta}$ and $\vartheta$ are known and used in the literature, we choose to present the following definition since it is particularly useful for our goal of showing $\chi$-amenability. The validity of this definition can be immediately verified by applying~\cite[Proposition 11.9]{MR3967118} to the complement of $G$.

\begin{defi}\label{def:vartheta}
Let $G$ be a graph. Let $\mathcal{M}_G$ denote the set of all positive semidefinite matrices $M\in \mathrm{Sym}_{V(G)}$ such that $M_{u,u}=1$ for all $u\in V(G)$ and $M_{u,v}=0$ for all distinct $u,v\in V(G)$ such that $uv\notin E(G)$. Then
$$\overline{\vartheta}(G)=\max\{\lambda_\mathrm{max}(M)|M\in \mathcal{M}_G\}.$$
\end{defi}

The first part of the definition of $\chi$-amenability, namely the submultiplicativity for graph unions, is well-known and can be explicitly found in the literature on the topic:

\begin{lem}[cf.~Corollary~4.5 in~\cite{MR3537033}]
For all graphs $G_1$ and $G_2$ on the same vertex-set, it holds that $$\overline{\vartheta}(G_1\cup G_2)\le \overline{\vartheta}(G_1)\overline{\vartheta}(G_2).$$
\end{lem}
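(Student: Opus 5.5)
We prove $\overline{\vartheta}(G_1\cup G_2)\le \overline{\vartheta}(G_1)\overline{\vartheta}(G_2)$ by working with the dual (vector-coloring) characterization of $\overline{\vartheta}$ rather than the maximization in Definition~\ref{def:vartheta}. The tool is the classical description of $\overline{\vartheta}(G)=\vartheta(\overline{G})$ as the least $\kappa\ge 1$ admitting unit vectors $(w_v)_{v\in V(G)}$ with $\langle w_u,w_v\rangle=-\frac{1}{\kappa-1}$ for every edge $uv\in E(G)$. Equivalently, writing $\beta=-\frac{1}{\kappa-1}$, one needs a PSD Gram matrix $W$ with unit diagonal and $W_{uv}=\beta$ on edges; non-edges are unconstrained. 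For $\kappa=1$ (an edgeless graph) everything is trivial. Via SDP duality one may equivalently use the formulation with off-diagonal entries $\le -\frac{1}{\kappa-1}$, but the equality version is what makes the tensor product argument clean.

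Let $\kappa_i=\overline{\vartheta}(G_i)$, and take unit vectors $w^{(i)}_v$ realizing these values, with $\langle w^{(i)}_u,w^{(i)}_v\rangle=-\frac{1}{\kappa_i-1}$ on the edges of $G_i$. A plain tensor product $w^{(1)}_v\otimes w^{(2)}_v$ does not work, because signs multiply to positive values. The standard fix is to use the augmented vectors $x^{(i)}_v=\bigl(\frac{1}{\sqrt{\kappa_i}}e_0,\ \sqrt{\frac{\kappa_i-1}{\kappa_i}}\,w^{(i)}_v\bigr)$. These are unit vectors with $\langle x^{(i)}_u,x^{(i)}_v\rangle=0$ on edges of $G_i$, and $\langle x^{(i)}_u,e_0\rangle=\frac{1}{\sqrt{\kappa_i}}$ for every $u$; this is exactly the orthonormal-representation/handle formulation of $\vartheta$ of the complement.

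Now set $x_v=x^{(1)}_v\otimes x^{(2)}_v$ with handle $e_0\otimes e_0$. Each $x_v$ is a unit vector. For every edge of $G_1\cup G_2$, one of the two factors is orthogonal, so $\langle x_u,x_v\rangle=0$. The handle inner product is $\frac{1}{\sqrt{\kappa_1\kappa_2}}$. Reversing the augmentation then yields unit vectors with inner product $-\frac{1}{\kappa_1\kappa_2-1}$ on all edges of $G_1\cup G_2$: project $x_v$ orthogonally to the handle and normalize. Hence $\overline{\vartheta}(G_1\cup G_2)\le\kappa_1\kappa_2$.

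The main point requiring care is the equivalence between the vector-chromatic formulation and Definition~\ref{def:vartheta}. That equivalence follows from SDP strong duality, since both programs are strictly feasible. We also need to confirm that the handle formulation matches $\vartheta(\overline{G})$ in the same convention. Both facts are standard (Lovász 1979; Karger--Motwani--Sudan), and we would cite them rather than reprove them.
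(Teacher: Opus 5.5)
Your proof is correct. The paper gives no argument for this lemma; it simply cites Corollary~4.5 of \cite{MR3537033}, so yours is a genuinely self-contained alternative. The computations check out.
The augmented vectors are unit, orthogonal on $E(G_i)$, and have inner product exactly $\kappa_i^{-1/2}$ with $e_0$. Their tensor products are orthogonal on every edge of $G_1\cup G_2$. Projecting off $e_0\otimes e_0$ gives squared norms $1-1/K$ and edge inner products $-1/K$ with $K=\kappa_1\kappa_2$, hence $-1/(K-1)$ after normalizing; the case $K=1$ is trivial.

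Conceptually, this is Lov\'asz's tensor-product bound $\vartheta(H_1\boxtimes H_2)\le\vartheta(H_1)\vartheta(H_2)$ restricted to the diagonal. Indeed, $\overline{G_1\cup G_2}=\overline{G_1}\cap\overline{G_2}$ is the subgraph of $\overline{G_1}\boxtimes\overline{G_2}$ induced by $\{(v,v)\}$, so the lemma also follows in one line from that bound and monotonicity of $\vartheta$ under induced subgraphs.

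Your route makes the mechanism visible, but it still relies on one fact from the literature: Definition~\ref{def:vartheta} agrees with the strict vector chromatic number. This needs SDP duality plus the rescaling $M\mapsto D_yMD_y$ between the unit-diagonal/$\lambda_{\max}$ and trace-one/$\langle J,\cdot\rangle$ formulations.
You can confine that fact to $G_1$ and $G_2$ individually. The Gram matrix of your tensor vectors is $X=X_1\circ X_2$ with $X_i=\frac{1}{\kappa_i}J+\frac{\kappa_i-1}{\kappa_i}W_i$, so $X-\frac{1}{K}J\succeq 0$ by the Schur product theorem. Now take any $M\in\mathcal{M}_{G_1\cup G_2}$ with unit top eigenvector $y$, and set $D_y=\mathrm{diag}(y)$. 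Since $M$ and $X$ have disjoint off-diagonal supports, $\langle D_yMD_y,X\rangle=\sum_u y_u^2=1$. Hence $1\ge\frac{1}{K}\langle D_yMD_y,J\rangle=\frac{1}{K}\lambda_{\max}(M)$, directly in the paper's language.

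A few small corrections:
\begin{itemize}
\item Your aside that one may equivalently require edge inner products $\le-\frac{1}{\kappa-1}$ is wrong. That is the non-strict vector chromatic number of Karger--Motwani--Sudan, which equals Schrijver's $\vartheta'(\overline{G})$ and can be strictly smaller than $\vartheta(\overline{G})$. It is also exactly the version where the tensor trick would fail, since factors that are merely $\le 0$ need not multiply to something $\le 0$. You never use it, so just delete it.
\item The step ``take unit vectors realizing these values'' needs the minimum to be attained. This follows from compactness of the set of unit-diagonal PSD matrices. Alternatively, use any $\kappa_i'>\kappa_i$ (all such values are feasible via $W\mapsto\lambda W+(1-\lambda)I$) and let $\kappa_i'\to\kappa_i$.
\item The reverse-augmentation step is optional. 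The tensor vectors with handle $e_0\otimes e_0$ already certify $\vartheta(\overline{G_1\cup G_2})\le\kappa_1\kappa_2$ in Lov\'asz's handle formulation.
\end{itemize}
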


Hence, it remains to verify that $\overline{\vartheta}$ satisfies the second condition in the definition of $\chi$-amenability. We verify this by proving the following lemma, showing that the condition holds with the absolute constant $C:=8$.

\begin{lem}\label{lem:vartheta}
For every graph $G$ there exists a subgraph $G'\subseteq G$ such that $\overline{\vartheta}(G')\ge \overline{\vartheta}(G)/8$ and $\chi(G')\le 8\overline{\vartheta}(G')$.
\end{lem}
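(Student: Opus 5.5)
The plan is to take a matrix $M\in\mathcal{M}_G$ that is optimal in Definition~\ref{def:vartheta}, and to use the Marcus--Spielman--Srivastava theorem to split $V(G)$ into about $\overline{\vartheta}(G)$ parts on each of which the principal submatrix of $M$ has bounded spectral norm. The subgraph $G'$ will \emph{not} be induced. It will be the spanning subgraph of $G$ keeping only the edges between different parts. Then each part is independent in $G'$, so $\chi(G')$ is at most the number of parts. A modified version of $M$, obtained by deleting the diagonal blocks, will witness that $\overline{\vartheta}(G')$ is still large.

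\textbf{Step 1 (setup).} Let $z:=\overline{\vartheta}(G)\ge 1$. Fix $M\in\mathcal{M}_G$ with $\lambda_\mathrm{max}(M)=z$ and a unit top eigenvector $\mathbf{x}$, so $\mathbf{x}^\top M\mathbf{x}=z$. Since $M\succeq 0$ with unit diagonal, write $M=W^\top W$ with unit columns $w_u$, $u\in V(G)$. Then $\sum_u w_uw_u^\top=WW^\top$ has the same nonzero spectrum as $M$, so $\lVert\sum_u w_uw_u^\top\rVert=z$.

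\textbf{Step 2 (MSS partition).} Put $v_u:=w_u/\sqrt z$, so that $\sum_u v_uv_u^\top\preceq I$ and $\lVert v_u\rVert^2=1/z$. I will use the MSS corollary: for vectors with $\sum_u v_uv_u^\top\preceq I$ and $\lVert v_u\rVert^2\le\delta$, there is a partition into $r$ parts with $\lVert\sum_{u\in S_i}v_uv_u^\top\rVert\le(1/\sqrt r+\sqrt\delta)^2$. It is usually stated for $=I$; the $\preceq I$ version follows by adding extra small vectors to complete to the identity, and the norm bound only improves on discarding them. Take $r:=\lceil z\rceil$ and $\delta=1/z$. This gives a partition $V(G)=S_1\sqcup\cdots\sqcup S_r$ with
$$\lambda_\mathrm{max}(M_{S_i})=\Bigl\lVert\sum_{u\in S_i}w_uw_u^\top\Bigr\rVert\le z\bigl(1/\sqrt r+1/\sqrt z\bigr)^2\le 4,$$
where $M_{S_i}$ is the principal submatrix on $S_i$.

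\textbf{Step 3 (the subgraph and its witness).} Let $G'$ be the spanning subgraph of $G$ with edge set $\{uv\in E(G):u,v\text{ in different parts}\}$. Then $\chi(G')\le r\le 2z$. Let $D$ be the block-diagonal matrix whose blocks are $M_{S_1},\dots,M_{S_r}$, so that $0\preceq D\preceq 4I$, and set $M'':=\tfrac14(M-D+4I)$. Then:
\begin{itemize}
\item $M''\succeq\tfrac14(4I-D)\succeq 0$;
\item every diagonal entry of $M''$ is $(1-1+4)/4=1$;
\item every off-diagonal entry of $M''$ inside a part vanishes, and entries across parts equal those of $M$, hence vanish on non-edges of $G$.
\end{itemize}
So $M''\in\mathcal{M}_{G'}$. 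Moreover $\mathbf{x}^\top D\mathbf{x}\le 4$, so $\lambda_\mathrm{max}(M'')\ge\mathbf{x}^\top M''\mathbf{x}\ge\tfrac14(z-4+4)=z/4$. Hence $\overline{\vartheta}(G')\ge z/4\ge\overline{\vartheta}(G)/8$, and $\chi(G')\le 2z\le 8\overline{\vartheta}(G')$, as the lemma requires.

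\textbf{Main obstacle.} The key step is Step 2: invoking the Kadison--Singer/Weaver-type partition theorem in the right form, namely with $\preceq I$ rather than equality and with $r$ parts. One also needs the identification $\lVert\sum_{u\in S}w_uw_u^\top\rVert=\lambda_\mathrm{max}(M_S)$, which holds since both are the squared top singular value of $W$ restricted to the columns in $S$. The rest of the argument is bookkeeping.
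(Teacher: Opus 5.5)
Your proposal is correct and follows essentially the same route as the paper's proof. The paper also completes $\sum_u v_uv_u^\top$ to the identity with small rank-one pieces so that Marcus--Spielman--Srivastava applies (this is its Lemma~\ref{lem:matrixpartition}), takes $\lceil z\rceil$ parts, and deletes the edges inside each part; its witness matrix $I+\frac14\bigl(M-\sum_i M_{V_i}\bigr)$ is exactly your $M''$.
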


The rest of this section will be devoted to the proof of this lemma (which will then complete the proof of Theorem~\ref{thm:amenable}). In the proof of Lemma~\ref{lem:vartheta}, we combine Definition~\ref{def:vartheta} with the following deep result proved by Marcus, Spielman and Srivastava~\cite{MR3374963} in their resolution of the Kadison-Singer problem. We note that the formulation of the following result is slightly different but very easily deduced from Corollary~1.5 in~\cite{MR3374963}. Namely, we only state the result for real-valued vectors, rather than for complex vectors as in~\cite{MR3374963}.

\begin{thm}[Marcus, Spielman and Srivastava, cf.~Corollary~1.5 in~\cite{MR3374963}]\label{thm:spielman}
Let $r, d, m\in \mathbb{N}$, $\delta\in (0,1]$, and let $u_1,\ldots,u_m\in \mathbb{R}^d$ satisfy
$$\sum_{i=1}^m u_iu_i^\top=I_d \text{   and   } u_i^\top u_i\le \delta \text{ for every }i\in [m].$$

Then there is a partition $[m]=T_1\sqcup\cdots\sqcup T_r$ (empty parts allowed) such that 
$$\left\lVert\sum_{i\in T_j}u_iu_i^\top\right\rVert\le \left(\frac{1}{\sqrt{r}}+\sqrt{\delta}\right)^2$$ for every $j\in [r]$.
\end{thm}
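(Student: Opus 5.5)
The plan is to deduce the statement from the complex version in~\cite{MR3374963}. I also recall how that version follows from their main probabilistic theorem, so that the role of each parameter is visible.

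\emph{Step 1 (reduction to the complex statement).} Regard each $u_i\in\mathbb{R}^d$ as a vector in $\mathbb{C}^d$. Then $u_i^*=u_i^\top$, so the hypotheses $\sum_i u_iu_i^*=I_d$ and $\lVert u_i\rVert^2\le\delta$ hold verbatim. Corollary~1.5 of~\cite{MR3374963} then yields a partition $[m]=T_1\sqcup\cdots\sqcup T_r$ with $\bigl\lVert\sum_{i\in T_j}u_iu_i^*\bigr\rVert\le(1/\sqrt r+\sqrt\delta)^2$. For each $j$, the matrix $\sum_{i\in T_j}u_iu_i^\top$ is real, symmetric and positive semidefinite. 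Its spectral norm is its largest eigenvalue, and this is the same whether we view it as a real or as a complex Hermitian matrix, since the eigenvalues coincide. This gives exactly the claimed bound.

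\emph{Step 2 (why Corollary~1.5 holds).} I would derive it from Theorem~1.4 of~\cite{MR3374963}, by the lifting trick used there. That theorem says: if $v_1,\dots,v_m$ are independent random vectors with finite support, $\sum_i\mathbb{E}\,v_iv_i^*=I$ and $\mathbb{E}\lVert v_i\rVert^2\le\epsilon$, then with positive probability $\bigl\lVert\sum_i v_iv_i^*\bigr\rVert\le(1+\sqrt\epsilon)^2$.
\begin{itemize}
\item For each $i$, let $v_i:=\sqrt r\,(u_i\otimes e_k)\in\mathbb{C}^{rd}$, where $k\in[r]$ is chosen uniformly and independently.
\item Then $\mathbb{E}\,v_iv_i^*=u_iu_i^*\otimes I_r$, so $\sum_i\mathbb{E}\,v_iv_i^*=I_{rd}$.
\item Also $\lVert v_i\rVert^2=r\lVert u_i\rVert^2\le r\delta=:\epsilon$.
\end{itemize}
Fix an outcome from Theorem~1.4 and let $T_k$ be the set of indices $i$ that chose $k$. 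The matrix $\sum_i v_iv_i^*$ is block diagonal, with $k$-th block $r\sum_{i\in T_k}u_iu_i^*$. Hence every block has norm at most $(1+\sqrt{r\delta})^2$. Dividing by $r$ gives $(1/\sqrt r+\sqrt\delta)^2$, as required.

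\emph{Main obstacle.} The real difficulty is Theorem~1.4 itself, which is the genuine content of~\cite{MR3374963}. Its proof has three parts:
\begin{itemize}
\item The expected characteristic polynomial of $\sum_i v_iv_i^*$ is shown to be a mixed characteristic polynomial.
\item The polynomials obtained by fixing outcomes form an interlacing family. Hence some outcome has largest root at most the largest root of the expected polynomial.
\item That largest root is bounded by $(1+\sqrt\epsilon)^2$ via a multivariate barrier argument on real stable polynomials.
\end{itemize}
Reproducing this would be far beyond the scope here, so I would cite it and only carry out Steps~1 and~2.
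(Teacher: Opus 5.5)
Your proposal is correct and follows the paper's route. The paper likewise obtains the statement by specializing Corollary~1.5 of~\cite{MR3374963} from complex to real vectors, which is exactly your Step~1. Your Step~2 is not needed for the paper but correctly reproduces how Marcus, Spielman and Srivastava derive that corollary from their Theorem~1.4, via the lift $v_i=\sqrt r\,(u_i\otimes e_k)$ with $\epsilon=r\delta$; the only cosmetic point is that with the ordering $u_i\otimes e_k$ the matrix $\sum_i v_iv_i^*$ is block diagonal only after a coordinate permutation, which leaves all norms unchanged.
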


Before we can give the proof of Lemma~\ref{lem:vartheta}, we first deduce the following handy consequence of Theorem~\ref{thm:spielman}. In the following, given a finite set $S$, a matrix $A\in \mathbb{R}^{S\times S}$ and a subset $T\subseteq S$, we denote by $A_T$ the matrix obtained from $A$ by replacing all entries $A_{i,j}$ where at least one of $i,j$ does not lie in $T$ by a $0$. In other words, the $T\times T$-submatrix is inherited from $A$, while all other entries are $0$-s.

\begin{lem}\label{lem:matrixpartition}
Let $S$ be a finite set, and let $M\in \mathrm{Sym}_S$ be a positive semidefinite matrix such that $M_{s,s}=1$ for all $s\in S$. Let $q\in \mathbb{N}$. Then there exists a partition $S=T_1\sqcup \cdots \sqcup T_q$ of $S$ (empty parts allowed) such that 
$$\lVert M_{T_i}\rVert\le \left(\sqrt{\frac{\lVert M\rVert}{q}}+1\right)^2$$ for every $i\in [q]$. 
\end{lem}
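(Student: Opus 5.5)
We plan to apply Theorem~\ref{thm:spielman} to a Gram factorization of $M$, rescaled so that the vectors sum to the identity. Since $M\succeq 0$, write $M=W^\top W$ with $W\in\mathbb{R}^{d\times S}$, where $d=\mathrm{rank}(M)$ and $W$ has full row rank $d$. (If $M=0$ the statement is trivial; in fact $M_{s,s}=1$ rules this out whenever $S\neq\emptyset$.) Let $w_s\in\mathbb{R}^d$ be the column of $W$ indexed by $s$, so $M_{s,s'}=w_s^\top w_{s'}$ and $\lVert w_s\rVert^2=1$. The frame operator $F:=WW^\top=\sum_{s}w_sw_s^\top\in \mathrm{Sym}_d$ has the same nonzero eigenvalues as $M$. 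Hence $F$ is positive definite and $F\preceq \lVert M\rVert I_d$.

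Next we whiten. Put $u_s:=F^{-1/2}w_s$. Then $\sum_s u_su_s^\top=I_d$, but the norms $\lVert u_s\rVert^2=w_s^\top F^{-1}w_s$ need not be small, since $F$ may have small eigenvalues. This is the main obstacle, and the fix is to add padding. Set $\lambda:=\lVert M\rVert$ and extend the vector family to all of $\mathbb{R}^d$ by adding finitely many vectors $e_i/\sqrt{N}$ repeated enough times to realize $\lambda I_d-F$. Concretely, diagonalize $\lambda I_d-F=\sum_i \mu_i f_if_i^\top$ with $\mu_i\ge 0$, and split each term $\mu_i f_if_i^\top$ into $N$ copies of $(\sqrt{\mu_i/N}f_i)(\sqrt{\mu_i/N}f_i)^\top$, with $N$ large. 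The resulting family $\{w_s\}\cup\{\text{padding}\}$ has frame operator exactly $\lambda I_d$. Dividing every vector by $\sqrt{\lambda}$ gives vectors $u$ with $\sum uu^\top=I_d$. Each original vector now has squared norm $1/\lambda$, and each padding vector has squared norm at most $\lambda/(N\lambda)\le 1/\lambda$ once $N\ge\lambda$. So $\delta:=1/\lambda$ works, and $\delta\le 1$ because $\lambda\ge M_{s,s}=1$.

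Apply Theorem~\ref{thm:spielman} with $r=q$ to this family. This gives a partition into $q$ parts, each of whose frame operators has norm at most $(1/\sqrt{q}+1/\sqrt{\lambda})^2$. Let $T_i$ be the original indices $s\in S$ landing in part $i$. Dropping padding vectors only decreases the frame operator in the Loewner order, so
\[
\Bigl\lVert\sum_{s\in T_i} w_sw_s^\top\Bigr\rVert\le \lambda\Bigl(\tfrac{1}{\sqrt q}+\tfrac{1}{\sqrt\lambda}\Bigr)^2=\Bigl(\sqrt{\tfrac{\lambda}{q}}+1\Bigr)^2 .
\]

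Finally, $M_{T_i}$ is (up to zero rows and columns) the Gram matrix $W_{T_i}^\top W_{T_i}$ of the vectors $\{w_s\}_{s\in T_i}$. Its spectral norm therefore equals that of $W_{T_i}W_{T_i}^\top=\sum_{s\in T_i}w_sw_s^\top$, which is exactly the displayed bound. The only delicate points are checking that $\delta\le1$ and that the padding norms stay below $1/\lambda$; both follow as indicated above.
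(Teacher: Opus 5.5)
Your proposal is correct and follows essentially the same route as the paper. Both arguments take a Gram factorization, pad the frame operator up to $\lVert M\rVert I$ using small-norm rank-one pieces of $\lVert M\rVert I-WW^\top$, apply Theorem~\ref{thm:spielman} with $\delta=1/\lVert M\rVert$ and $r=q$, discard the padding by Loewner monotonicity, and transfer the bound back via the shared nonzero spectrum of $W_{T_i}W_{T_i}^\top$ and $M_{T_i}$. Your differences are cosmetic: you work in $\mathbb{R}^{\mathrm{rank} M}$ rather than $\mathbb{R}^S$, split into $N$ equal copies rather than by repeated halving, and include the abandoned whitening idea.
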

In the proof of this auxiliary result, we will use the following four well-known linear algebra facts: \begin{enumerate}\item Every positive semi-definite $n\times n$-matrix is the Gram matrix of a collection of $n$ vectors (this can, for instance, be proved via Cholesky-factorization).
\item Every positive semidefinite $n\times n$-matrix can be written as a sum of matrices of the form $\mathbf{u}\mathbf{u}^\top$ where $\mathbf{u}$ is an $n$-dimensional vector (this, in fact, is an easy consequence of the first fact stated above).
\item For any matrix $B$, the two matrices $B^\top B$ and $BB^\top$ have the same non-zero eigenvalues (the proof is elementary).
\item For any two positive semidefinite matrices $U$ and $V$, we have $\lVert U\lVert\le \lVert U+V\rVert$. This follows immediately from the representation of these spectral norms as maximal Rayleigh-quotients.
\end{enumerate}
\begin{proof}[Proof of Lemma~\ref{lem:matrixpartition}]
Since $M\in \mathbb{R}^{S\times S}$ is positive semidefinite by assumption of the lemma, the first linear algebra fact recorded above implies that $M$ is a Gram matrix of a collection of vectors $(\mathbf{b}_s)_{s\in S}$ in $\mathbb{R}^S$. In other words, we have that $M_{s,s'}=\mathbf{b}_s^\top\mathbf{b}_{s'}$ for all $s,s'\in S$. Denoting by $B\in \mathbb{R}^{S\times S}$  the matrix whose column with index $s$ equals $\mathbf{b}_s$, for every $s\in S$, we then have $M=B^\top B$. Note that the matrix $BB^\top$ is also positive semi-definite, and, by the third aforementioned linear algebra fact, has the same non-zero eigenvalues (which must all be positive). This in particular implies that $\lVert BB^\top\rVert=\lambda_{\mathrm{max}}(BB^\top)=\lambda_{\mathrm{max}}(B^\top B)=\lVert B^\top B\rVert=\lVert M\rVert$. Now, consider the matrix $C:=I_S-\frac{1}{\lVert M\rVert}BB^\top$ (this is well-defined: since all diagonal entries of $M$ are $1$-s, we have $\lVert M\rVert\ge 1$). $C$ is clearly symmetric, and for every $\mathbf{x}\in \mathbb{R}^{S}$ we have $$\mathbf{x}^\top C\mathbf{x}=\mathbf{x}^\top \mathbf{x}-\frac{\mathbf{x}^\top(BB^\top)\mathbf{x}}{\lVert M\rVert}\ge \mathbf{x}^\top \mathbf{x}-\frac{\lambda_{\mathrm{max}}(BB^\top)(\mathbf{x}^\top\mathbf{x})}{\lVert M\rVert}\ge 0,$$ where we used that $\lambda_{\mathrm{max}}(BB^\top)\le \lVert M\rVert$ in the last step. Hence, $C\in \mathrm{Sym}_S$ is positive semidefinite. By the second linear algebra fact mentioned above, there are vectors $\mathbf{u}_1,\ldots,\mathbf{u}_k$ for some $k\in \mathbb{N}$ such that $C=\sum_{i=1}^k \mathbf{u}_i\mathbf{u}_i^\top$. Note that since $\mathbf{u}\mathbf{u}^\top=(\frac{1}{2}\mathbf{u})(\frac{1}{2}\mathbf{u})^\top+(\frac{1}{2}\mathbf{u})(\frac{1}{2}\mathbf{u})^\top+(\frac{1}{2}\mathbf{u})(\frac{1}{2}\mathbf{u})^\top+(\frac{1}{2}\mathbf{u})(\frac{1}{2}\mathbf{u})^\top$ and $(\frac{1}{2}\mathbf{u})^\top(\frac{1}{2}\mathbf{u})=\frac{1}{4}(\mathbf{u}^\top\mathbf{u})$ for every $\mathbf{u}\in \mathbb{R}^S$, possibly by suitably increasing $k$ we may assume that we choose $\mathbf{u}_1,\ldots,\mathbf{u}_k$ in this representation of $C$ such that $\mathbf{u}_i^\top\mathbf{u}_i\le \frac{1}{\lVert M\rVert}$ for every $i\in [k]$. Next, for every $s\in S$, let us set $\mathbf{u}_s:=\frac{1}{\sqrt{\lVert M\rVert}}\mathbf{b}_s$. Since $M$ is the Gram-matrix of the $(\mathbf{b}_s)_{s\in S}$ and by assumption on $M$ in the lemma, we then have $\mathbf{u}_s^\top\mathbf{u}_s=\frac{1}{\lVert M\rVert}\mathbf{b}_s^\top\mathbf{b}_s=\frac{M_{s,s}}{\lVert M\rVert}=\frac{1}{\lVert M\rVert}$ for every $s\in S$. Furthermore, by definition of $C$, we have $$\sum_{s\in S}\mathbf{u}_s\mathbf{u}_s^\top+\sum_{i=1}^k\mathbf{u}_i\mathbf{u}_i^\top=\frac{1}{\lVert M\rVert}\sum_{s\in S}\mathbf{b}_s\mathbf{b}_s^\top+C=\frac{1}{\lVert M\rVert}BB^\top+C=I_S.$$

We are therefore now in the position to apply Theorem~\ref{thm:spielman} to the collection of vectors $\{\mathbf{u}_s| s\in S\}\cup \{\mathbf{u}_i|i\in [k]\}$ with the parameters $\delta:=\frac{1}{\lVert M\rVert}$ and $r:=q$. This yields a partition of the index-set\footnote{W.l.o.g., we assume here that $S$ is disjoint from $[k]$.} $S\cup [k]$ into $q$ disjoint (possibly empty) sets $T_1',\ldots, T_q'$, such that
$$\left\lVert\sum_{s\in S\cap T_j'}\mathbf{u}_s\mathbf{u}_s^\top+\sum_{i\in [k]\cap T_j'}\mathbf{u}_i\mathbf{u}_i^\top\right\rVert\le \left(\frac{1}{\sqrt{q}}+\sqrt{\delta}\right)^2$$ for every $j\in [q]$. Set $T_j:=T_j'\cap S$ for every $j\in [q]$, such that $S=T_1\sqcup \cdots \sqcup T_q$. Note that since the matrices $\sum_{s\in T_j}\mathbf{u}_s\mathbf{u}_s^\top=\sum_{s\in S\cap T_j'}\mathbf{u}_s\mathbf{u}_s^\top$ and $\sum_{i\in [k]\cap T_j'}\mathbf{u}_i\mathbf{u}_i^\top$ are both clearly positive semidefinite for every $j\in [q]$, the fourth linear algebra fact stated before the proof of the lemma now implies that
$$\left\lVert\sum_{s\in T_j}\mathbf{u}_s\mathbf{u}_s^\top\right\rVert\le \left(\frac{1}{\sqrt{q}}+\sqrt{\delta}\right)^2$$ for every $j\in [q]$. Finally, recalling that $\mathbf{u}_s=\frac{1}{\sqrt{\lVert M\rVert}}\mathbf{b}_s$ for every $s\in S$ and that $\delta=\frac{1}{\lVert M\rVert}$, this implies that
$$\left\lVert\sum_{s\in T_j}\mathbf{b}_s\mathbf{b}_s^\top\right\rVert\le \lVert M\rVert\cdot\left(\frac{1}{\sqrt{q}}+\frac{1}{\sqrt{\lVert M\rVert}}\right)^2=\left(\sqrt{\frac{\lVert M\rVert}{q}}+1\right)^2$$ for every $j\in [q]$.

Finally, for every fixed $j\in [q]$, note that we can write $\sum_{s\in T_j}\mathbf{b}_s\mathbf{b}_s^\top$ as $B_jB_j^\top$, where $B_j\in \mathbb{R}^{S\times S}$ is defined as the matrix whose column indexed by $s\in S$ equals $\mathbf{b}_s$ if $s\in T_j$, and has zero-entries otherwise. By the third linear algebra fact stated before the proof, $B_jB_j^\top$ further has the same non-zero (i.e., positive) eigenvalues as $B_j^\top B_j$. The latter matrix is easily verified to equal $M_{T_j}$, by virtue of $M$ being the Gram matrix of the $(\mathbf{b}_s)_{s\in S}$. Hence, it follows that
$$\lVert M_{T_j}\rVert=\lVert B_j^\top B_j\rVert=\lambda_{\mathrm{max}}(B_j^\top B_j)=\lambda_{\mathrm{max}}(B_jB_j^\top)=\lVert B_jB_j^\top\rVert\le \left(\sqrt{\frac{\lVert M\rVert}{q}}+1\right)^2$$ for every $j\in [q]$. This shows that the partition $S=T_1\sqcup \cdots \sqcup T_q$ of $S$ satisfies the property required by the lemma statement, and concludes the proof.
\end{proof}

We are now ready for the proof of Lemma~\ref{lem:vartheta}. As explained before, with this proof we then also conclude the proof of Theorem~\ref{thm:amenable}.

\begin{proof}[Proof of Lemma~\ref{lem:vartheta}]
Let $G$ be any given graph, and let us set $z:=\overline{\vartheta}(G)\ge 1$. By Definition~\ref{def:vartheta}, there exists a matrix $M\in \mathcal{M}_G$ such that $\lambda_\mathrm{max}(M)=z$. Since $M\in \mathrm{Sym}_{V(G)}$ is positive semidefinite, we have that $\lVert M\rVert=\lambda_\mathrm{max}(M)=z$. Let $q:=\lceil z\rceil$. Note that since $M\in \mathcal{M}_G$, we may now apply Lemma~\ref{lem:matrixpartition} to $M$ and $q$. We thus find that there exists a partition $V(G)=V_1\sqcup \cdots \sqcup V_q$ of the vertex-set of $G$ such that $$\lVert M_{V_i}\rVert\le \left(\sqrt{\frac{\lVert M\rVert}{q}}+1\right)^2=\left(\sqrt{\frac{z}{\lceil z\rceil}}+1\right)^2\le 4$$ for every $i\in [q]$. Let now $G'$ be the spanning subgraph of $G$, obtained from $G$ by deleting, for every $i\in [q]$, the edges of $G$ which have both their endpoints in $V_i$. Clearly, each $V_i$ is an independent set in $G'$, and hence we have $\chi(G')\le q$.

In the following we will define a matrix $M'$, of which we will then show that it belongs to $\mathcal{M}_{G'}$ and has a suitably large operator norm, so that we obtain a good lower bound on $\overline{\vartheta}(G')$ using Definition~\ref{def:vartheta}. Concretely, we define
$$M':=I_{V(G)}+\frac{1}{4}\left(M-\sum_{i=1}^{q}M_{V_i}\right).$$
As a linear combination of symmetric matrices, $M'$ is still symmetric. Furthermore, note that since $V_1, \ldots, V_q$ partition $V(G)$, we have that the diagonal entries of $M$ and $\sum_{i=1}^q M_{V_i}$ agree. Thus, every diagonal entry of $M'$ equals $1$. 

Note that with $M$ also each of the matrices $M_{V_i}, i=1,\ldots,q$ is positive semi-definite: For every vector $\mathbf{y}\in \mathbb{R}^{V(G)}$, we have $\mathbf{y}^\top M_{V_i}\mathbf{y}=\mathbf{w}^\top M\mathbf{w}\ge 0$, where $\mathbf{w}$ is obtained from $\mathbf{y}$ by replacing all entries with index outside of $V_i$ by zeros. This in particular implies that $\lambda_{\mathrm{max}}(M_{V_i})=\lVert M_{V_i}\rVert\le 4$ for every $i$.

We next claim that $M'$ is positive semidefinite. So consider any non-zero vector $\mathbf{x}\in \mathbb{R}^{V(G)}$. For each $i\in [q]$, let $\mathbf{x}_i$ denote the vector obtained from $\mathbf{x}$ by replacing all entries of $\mathbf{x}$ with index outside $V_i$ by zeroes. We then have:
\begin{align*}
    \mathbf{x}^\top M' \mathbf{x}&=\mathbf{x}^\top \mathbf{x}+\frac{1}{4}\mathbf{x}^\top M\mathbf{x}-\frac{1}{4}\sum_{i=1}^q\mathbf{x}^\top M_{V_i}\mathbf{x}\\
    &\ge \mathbf{x}^\top\mathbf{x}\left(1-\frac{1}{4}\sum_{i=1}^q\frac{\mathbf{x}^\top M_{V_i}\mathbf{x}}{\mathbf{x}^\top\mathbf{x}}\right)+\frac{1}{4}\mathbf{x}^\top M\mathbf{x}\\
    &=\mathbf{x}^\top\mathbf{x}\left(1-\frac{1}{4}\sum_{i=1}^q\frac{\mathbf{x}_i^\top M_{V_i}\mathbf{x}_i}{\mathbf{x}^\top\mathbf{x}}\right)+\frac{1}{4}\mathbf{x}^\top M\mathbf{x}\\
    &\ge \mathbf{x}^\top\mathbf{x}\left(1-\frac{1}{4}\sum_{i=1}^q\frac{4\mathbf{x}_i^\top\mathbf{x}_i}{\mathbf{x}^\top\mathbf{x}}\right)+\frac{1}{4}\mathbf{x}^\top M\mathbf{x}\\
    &= \mathbf{x}^\top\mathbf{x}\left(1-\frac{\sum_{i=1}^q\mathbf{x}_i^\top\mathbf{x}_i}{\mathbf{x}^\top\mathbf{x}}\right)+\frac{1}{4}\mathbf{x}^\top M\mathbf{x}=\frac{1}{4}\mathbf{x}^\top M\mathbf{x}\ge 0.\tag{7}
\end{align*}
Here we used that $\mathbf{x}_i^\top M_{V_i}\mathbf{x}_i\le \lambda_{\mathrm{max}}(M_{V_i})\mathbf{x}_i^\top\mathbf{x}_i\le 4\mathbf{x}_i^\top\mathbf{x}_i$ for every $i\in [q]$ as well as the fact that $\sum_{i=1}^{q}\mathbf{x}_i^\top\mathbf{x}_i=\mathbf{x}^\top\mathbf{x}$ which follows directly from the definition of the vectors $\mathbf{x}_1,\ldots,\mathbf{x}_q$. Hence, $M'$ is indeed positive semidefinite. Finally, consider any two distinct $u,v\in V(G)$ such that $uv\notin E(G')$. Then, by definition of $G'$, we must have $uv\notin E(G)$ or $u,v\in V_i$ for some $i\in [q]$. In the first case, $M\in \mathcal{M}_G$ implies that $M_{u,v}=0$ and hence $(M_{V_j})_{u,v}=0$ for all $j\in [q]$ as well. Using the definition of $M'$ and since $u\neq v$, this implies that also $(M')_{u,v}=0$ in this case, as desired. In the second case, we have $M_{u,v}=(M_{V_i})_{u,v}$ and $(M_{V_j})_{u,v}=0$ for all $j\neq i$. Hence, also in this case we obtain $(M')_{u,v}=0$ from the definition of $M'$. Thus, we have shown that $M'$ has a zero-entry at position $(u,v)$ whenever $u\neq v$ are nonadjacent vertices of $G'$. Summarizing, we have now verified all conditions defining membership of the set $\mathcal{M}_{G'}$ of matrices, and hence we have shown that $M'\in \mathcal{M}_{G'}$. By Definition~\ref{def:vartheta}, we thus find
$$\overline{\vartheta}(G')\ge \lambda_{\mathrm{max}}(M')=\max_{\mathbf{x}\neq 0} \frac{\mathbf{x}^\top M'\mathbf{x}}{\mathbf{x}^\top\mathbf{x}}.$$ To give the desired lower bound on $\overline{\vartheta}(G')$, consider a unit vector $\mathbf{x}_\ast\in \mathbb{R}^{V(G)}$ such that $\mathbf{x}_\ast^\top M\mathbf{x}_\ast=\lambda_{\max}(M)=z$. By inequality~(7), we then find that 
$$\overline{\vartheta}(G')\ge \mathbf{x}_\ast^\top M' \mathbf{x}_\ast
    \ge \frac{1}{4}\mathbf{x}_\ast^\top M\mathbf{x}_\ast=\frac{z}{4}\ge \frac{q}{8}.$$

Altogether, we have thus found a subgraph $G'$ of $G$ satisfying that $\overline{\vartheta}(G')\ge z/4\ge z/8=\overline{\vartheta}(G)/8$ and $\chi(G')\le q\le 8\overline{\vartheta}(G')$. This concludes the proof of the lemma.
\end{proof}

\section{Concluding remarks}\label{sec:conc}

Several comments about Theorem~\ref{thm:main} are in order. First of all, it shows that guaranteeing a $k$-chromatic subgraph avoiding a fixed \emph{odd cycle} requires only a chromatic number bound which is a tower of \emph{bounded height} (independent of $k$), while Pettie, Tardos and Walczak~\cite{MR5027075} recently showed that to avoid $C_4$, a tower whose height \emph{depends on $k$} (even linearly) is necessary. This suggests that preserving high chromatic number while avoiding short even cycles  is significantly more difficult than avoiding short odd cycles. 

The best \emph{lower bounds} for the functions $h_g(\cdot)$ we know come from considering complete host graphs, i.e., by considering upper bounds on the maximum chromatic number of $n$-vertex  graphs with (odd-)girth $g$. Concretely, Denley~\cite{MR1293399} showed that when $g\ge 5$ is an odd number, then every $n$-vertex graph with odd-girth at least $g$ has independence number at least $n^{(g-3)/(g-1)+o(1)}$. By a simple greedy process that repeatedly removes a largest independent set, this implies that such graphs have chromatic number at most $n^{2/(g-1)+o(1)}$. Thus, we find that $h_g(k)\ge k^{(g-1)/2+o(1)}$ for every odd $g\ge 5$, and we are not aware of a better lower bound. This raises the following question, which remains unanswered here, as all our upper bounds are exponential towers of constant height.

\begin{que}
    Fix an odd number $g\ge 5$. Is the function $h_g$ bounded polynomially?
\end{que}

We suspect the answer may be negative.

Finally, it would be interesting to extend the list in Theorem~\ref{thm:amenable} by identifying further meaningful and well-studied graph parameters that are $\chi$-amenable.

\appendix

\section{Proof of Corollary~\ref{cor:infinite}}\label{app}

In this section, for the sake of completness we supply the proof of Corollary~\ref{cor:infinite} about infinite graphs with infinite chromatic number.

\begin{proof}[Proof of Corollary~\ref{cor:infinite}]
Let $G$ be an infinite graph with infinite chromatic number and let $g\ge 5$ be a fixed odd integer. We claim that there exists a sequence of pairwise vertex-disjoint finite subgraphs $(G_i)_{i\ge 1}$ of $G$ such that for every $i\in \mathbb{N}$, we have $\chi(G_i)\ge i$ and $G_i$ has odd-girth at least $g$. Once we have established the claim, we can observe that the disjoint union $\bigcup_{i=1}^\infty G_i$ forms a subgraph of $G$ with infinite chromatic number and odd-girth at least $g$.

We construct the sequence inductively: Suppose that for some $i\ge 1$ we already constructed vertex-disjoint finite subgraphs $G_1,\ldots,G_{i-1}$ of $G$ such that $\chi(G_j)\ge j$ and $G_j$ has odd-girth at least $g$ for every $1\le j<i$, and let us define $G_i$ with the desired properties. For this purpose, first of all note that the induced subgraph $G[\bigcup_{1\le j<i}V(G_j)]$ is finite, and thus has finite chromatic number. It follows that $G':=G-\bigcup_{1\le j<i}V(G_j)$ has infinite chromatic number. By the de Bruijn--Erd\H{o}s theorem, it follows that there exist finite subgraphs of $G'$ with arbitrarily large chromatic number. Theorem~\ref{thm:main} then implies that any such subgraph of $G'$ with suitably large chromatic number must contain a subgraph with chromatic number at least $i$ and odd-girth at least $g$. We define $G_i$ to be such a subgraph. By definition it is vertex-disjoint from $G_1,\ldots,G_{i-1}$, and has all desired properties. Hence we may conclude the proof.
\end{proof}

\fontsize{11pt}{12pt}
\selectfont
	
\hypersetup{linkcolor={red!70!black}}
\setlength{\parskip}{2pt plus 0.3ex minus 0.3ex}

\bibliographystyle{auxfile.bst}
\bibliography{bib.bib}

\end{document}